\newtheorem{thm}{Theorem}[section]
\newtheorem{cor}[thm]{Corollary}
\newtheorem{lem}[thm]{Lemma}
\newtheorem{prop}[thm]{Proposition}
\theoremstyle{definition}
\newtheorem{defn}[thm]{Definition}
\theoremstyle{remark}
\newtheorem{rmk}[thm]{Remark}
\numberwithin{equation}{section}
\definecolor{red}{rgb}{1,0,0}
\newcommand{\kibitz}[2]{\ifnum\Comments=1\textcolor{#1}{#2}\fi}
\title{The Continuity Method to Deform Cone Angle}
\author{Chengjian Yao}
\address{Department of Mathematics, Stony Brook University, Stony Brook, NY 11794}
\email{yao@math.sunysb.edu}
\begin{document}
\maketitle

\begin{abstract}
The continuity method is used to deform the cone angle of a weak conical K\"ahler-Einstein metric with cone singularities along a smooth anti-canonical divisor on a smooth Fano manifold. This leads to an alternative proof of Donaldson's Openness Theorem on deforming cone angle \cite{Don} by combining it with the regularity result of Guenancia-P$\breve{\text{a}}$un \cite{GP} and Chen-Wang \cite{CW}. This continuity method uses relatively less regularity of the metric (only weak conical K\"ahler-Einstein) and bypasses the difficult Banach space set up; it is also generalized to deform the cone angles of  a \emph{weak conical K\"ahler-Einstein metric} along a simple normal crossing divisor (pluri-anticanonical) on a smooth Fano manifold (assuming no tangential holomorphic vector fields). 
\end{abstract}
\section{Introduction}

In \cite{Don}, Donaldson defined a notion of \emph{conical K\"ahler-Einstein metric} where the K\"ahler potential is required to have $C^{2,\alpha,\beta}$ regularity and proved the Openness Theorem on deforming the cone angles by using a delicate version of the implicit function theorem depending on the Schauder estimate established therein.

 In this paper, smooth continuity paths are used to vary the cone angles of \emph{weak conical K\"ahler-Einstein metrics}, which are the solutions to the \emph{current equation}:

 \begin{equation}\label{current}
 \text{Ric }\omega=\beta\omega+2\pi(1-\beta)[D]
 \end{equation}
where $[D]$ is the \emph{current of integration} along $D$, and $\omega$ is a K\"ahler current on $X$ with bounded potential (in the sense of \cite{BBEGZ}). See Definition \ref{def2-1} for the precise definition.

\begin{thm}\label{thm1-1}
If there exists a weak conical K\"{a}hler-Einstein metric $\omega_{\varphi_\beta}$ with angle $2\pi \beta$ $(0<\beta<1)$ along a smooth anti-canonical divisor $D$ on a smooth Fano manifold $X$, then there exists $\delta>0$, such that for all $\beta'\in (\beta-\delta,\beta+\delta)$, there exists a weak conical K\"{a}hler-Einstein metric $\omega_{\varphi_{\beta'}}$ with angle $2\pi \beta'$ along the divisor $D$. 
\end{thm}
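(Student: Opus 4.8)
The plan is to recast the current equation \eqref{current} as a family of complex Monge--Amp\`ere equations indexed by the cone angle and to run a continuity method in that parameter, working throughout with bounded-potential K\"ahler currents rather than with the H\"older spaces adapted to the cone. First I would fix a smooth Hermitian metric $h$ on $-K_X$ whose curvature $\omega_0$ is a K\"ahler form in $2\pi c_1(X)$, together with a defining section $s$ of $D$. By the Poincar\'e--Lelong formula $2\pi[D]=\omega_0+i\partial\bar\partial\log|s|_h^2$, so a K\"ahler current $\omega_\varphi=\omega_0+i\partial\bar\partial\varphi$ with bounded potential solves \eqref{current} with angle $2\pi t$ if and only if the bounded $\omega_0$-psh function $\varphi$ solves
\[
(\omega_0+i\partial\bar\partial\varphi)^n=\frac{e^{F_0-t\varphi}}{|s|_h^{2(1-t)}}\,\omega_0^n ,
\]
where $F_0$ is the fixed Ricci potential of $\omega_0$; because of the factor $e^{-t\varphi}$ there is no free additive constant, so this is a bona fide equation $(\mathrm{MA}_t)$. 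Let $S\subset(0,1)$ denote the set of $t$ for which $(\mathrm{MA}_t)$ admits such a solution. By hypothesis $\beta\in S$, and it suffices to show that $S$ contains a neighborhood of $\beta$.

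For openness I would deliberately avoid the linearized operator $\Delta_{\omega_t}+t$, whose kernel is precisely the holomorphic vector fields tangent to $D$ and whose inversion forces the conical Schauder theory and the Banach-space setup. Instead I would argue variationally: the existence of a solution of $(\mathrm{MA}_t)$ is equivalent to coercivity of the twisted Ding functional $\mathcal{D}_t$ (properness modulo $\mathrm{Aut}_0(X,D)$) on the space $\mathcal{E}^1$ of finite-energy potentials. Since $\mathcal{D}_t$ and the energy $J$ depend continuously on $t$ through the density $e^{F_0}/|s|_h^{2(1-t)}$, which stays uniformly integrable for $t$ in a compact subinterval of $(0,1)$, one expects coercivity to persist for $t$ near $\beta$; the direct method of the calculus of variations then yields, for each such $t$, a finite-energy minimizer of $\mathcal{D}_t$, i.e.\ a weak conical K\"ahler--Einstein metric of angle $2\pi t$. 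Carrying the action of $\mathrm{Aut}_0(X,D)$ through the coercivity statement is exactly what lets me dispense with any no-vector-field hypothesis in the smooth-divisor case.

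The main obstacle is the uniform quantitative control that both makes this coercivity statement true near $\beta$ and gives closedness of $S$. Concretely I would need a uniform $L^\infty$ bound on the (suitably normalized) solutions $\varphi_t$ for $t$ in a fixed interval around $\beta$, which I would extract from the uniform coercivity together with a Ko\l odziej-type estimate, using that $|s|_h^{-2(1-t)}\in L^p(\omega_0^n)$ with $p$ bounded below uniformly in $t$. With such a bound in hand, the compactness theory for finite-energy currents (BBEGZ) allows me, along any sequence $t_k\to T$ in $S$, to extract a limit $\varphi_T$ that is again a bounded $\omega_0$-psh solution of $(\mathrm{MA}_T)$, so $S$ is closed near $\beta$; combined with openness and $\beta\in S$ this produces $\delta>0$ with $(\beta-\delta,\beta+\delta)\subset S$, which is Theorem~\ref{thm1-1}. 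I expect the genuinely hard point to be ruling out loss of coercivity (equivalently, escape of the potentials to infinity) as $t$ varies, and in controlling it modulo automorphisms. Finally, feeding these weak solutions into the regularity theorems of Guenancia--P\u{a}un and Chen--Wang upgrades them to honest conical K\"ahler--Einstein metrics, recovering Donaldson's openness theorem without the Banach-space machinery.
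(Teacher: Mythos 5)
Your proposal takes a genuinely different route from the paper (which runs two smooth continuity paths in $(\epsilon,t)$ and never works variationally in the finite-energy space), but as written it has a gap at its central step: the coercivity of the twisted Ding functional at the angle $\beta$ and at nearby angles. Your hypothesis is only the \emph{existence} of a weak conical K\"ahler--Einstein metric at angle $\beta$. From existence, the tools you could cite (Berman, BBEGZ --- convexity and uniqueness along geodesics) give only that the log-Ding/log-Mabuchi functional is \emph{bounded below}, not that it is coercive; the implication ``existence $\Rightarrow$ coercivity (modulo automorphisms)'' is the hard direction of Tian's properness conjecture, a Darvas--Rubinstein-type theorem that is far stronger than anything needed here and is not obtainable from the regularity and uniqueness results you invoke. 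Without it, your starting point --- coercivity of $\mathcal{D}_\beta$ --- is simply unavailable. Moreover, even granting coercivity at $\beta$, the step ``one expects coercivity to persist for $t$ near $\beta$'' is precisely the crux and does not follow from continuity of the densities: the difference of the functionals at angles $t$ and $\beta$ contains the term $(\beta-t)J_\chi(\varphi)$, i.e.\ essentially $(t-\beta)\int_X \log|s|_h^2\,\omega_\varphi^n$, which is unbounded below on the space of potentials; to absorb it one needs a quantitative Skoda-type estimate of the form $\int_X(-\log|s|_h^2)\,\omega_\varphi^n \le \varepsilon\, J_{\omega_0}(\varphi) + C_\varepsilon$, so that the $t$-dependence is Lipschitz with constant linear in the energy. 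You flag this yourself as ``the genuinely hard point,'' which is an honest admission that the proposal does not contain the proof.

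By contrast, the paper is engineered exactly to avoid needing coercivity at angle $\beta$. It uses only the lower bound on $E_{(1-\beta)D}$ (which \emph{does} follow from existence) and manufactures coercivity by hand: the modified functional $\widetilde{E}_{\epsilon,\beta'} = E_{\epsilon,(1-\beta)D} + (\beta-\beta')J_{\omega_{\varphi_\epsilon}}$ is simultaneously coercive (Proposition \ref{lem3-4}) and monotone decreasing along the smooth path $\star_{\epsilon,t}^\beta$ for $t\le\beta'$, which yields the uniform $L^\infty$ bounds; the endpoint $t=\beta$ and the subsequent deformation of the angle are then handled by compactness together with a contradiction argument resting on Berndtsson's uniqueness theorem and the fact (Berman, Song--Wang) that no holomorphic vector field is tangent to a smooth anticanonical divisor $D$ --- a fact that also renders your ``modulo $\mathrm{Aut}_0(X,D)$'' precaution unnecessary in this setting. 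If you wish to salvage the variational approach, you must either import the full ``existence $\Leftrightarrow$ coercivity'' theorem together with the Skoda-type estimate above, or else replace genuine coercivity of $\mathcal{D}_\beta$ by a perturbed (modified) functional, which is in effect what the paper does.
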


The method is a combination of two smooth continuity paths, the first one (the one that is used in \cite{CDS1}) is used to approximate $\omega_{\varphi_\beta}$ by smooth K\"ahler metrics with Ricci curvature bounded below by $\beta$, and the second one is used to deform the angle parameter $\beta$ to nearby $\beta'$. The potentials of the smooth approximating K\"ahler metrics are shown to have uniform $L^\infty$ bound, which enables us to pass to the higher order bounds. The weak conical K\"ahler-Einstein metric with angle $2\pi\beta'$ is obtained by taking the limit of these smooth K\"ahler metrics.

By the very recent regularity result of Guenancia-P$\breve{\text{a}}$un \cite{GP}, proved independently by Chen-Wang \cite{CW}, the deformed \emph{weak conical K\"ahler-Einstein metrics} are actually H\"older continuous, i.e. $\omega_{\varphi_{\beta'}}\in C^{2, \alpha, \beta'}$. Therefore, together with this regularity result, Theorem \ref{thm1-1} gives a new proof of Donaldson's Openness Theorem (Theorem 2 of \cite{Don}) for conical K\"ahler-Einstein metrics.


 Since the essential tool used in the study of continuity path, the \emph{log-Mabuchi-functional} (see Definition \ref{defn3-3}), requires relatively less regularity, we bypass the \emph{implicit function theorem} for singular K\"ahler metrics. This observation enables us to generalize the openness property to some other settings, where the Banach space theory seems difficult or subtle to set up. For instance, the case of \emph{simple normal crossing divisor (pluri-anticanonical)} on a smooth Fano manifold: 
\begin{thm}\label{thm1-2}
On a smooth Fano manifold $X$, suppose there is a \emph{weak conical K\"ahler-Einstein metric} with angle $2\pi\beta_i$ $(\text{for } \beta_i\in (0,1))$ along $D_i$, where $D_i\in |-\lambda_iK_X|$ with $\lambda_i>0$ and $\cup_i D_i$ being simple normal crossing, and also assume there is no holomorphic vector field tangential to any $D_i$. Then there exists $\delta>0$ small enough such that for all $\beta_i'\in (\beta_i-\delta,\beta_i+\delta)$, there exists a weak conical K\"ahler-Einstein metric with angle $2\pi\beta_i'$ along $D_i$ for $i=1, \cdots, k$ .
\end{thm}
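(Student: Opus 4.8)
The plan is to prove Theorem 1.2 by essentially the same two-stage continuity-method strategy used for Theorem 1.1, but now handling several cone angles simultaneously along the simple normal crossing divisor $\cup_i D_i$. The starting point is the assumed weak conical Kähler-Einstein metric $\omega$ solving the multi-divisor current equation
$$
\text{Ric }\omega = \omega + \sum_i 2\pi(1-\beta_i)[D_i],
$$
where one should note that the coefficient of $\omega$ on the right is normalized to $1$ (Fano, Ricci-positive endpoint); the condition $D_i\in|-\lambda_i K_X|$ with the constraint linking the $\lambda_i$ and the $\beta_i$ to $c_1(X)$ is what makes the cohomology class $[\omega]$ proportional to $c_1(X)$, so the Einstein equation is self-consistent. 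I would first approximate $\omega$ by smooth Kähler metrics, running the Chen–Donaldson–Sun-type regularization path that replaces each current $[D_i]$ by a smoothed form $\theta_{\varepsilon,i}$ (e.g. via $\log(|s_i|^2+\varepsilon^2)$ for a defining section $s_i$ of $D_i$), producing smooth metrics $\omega_\varepsilon$ with $\text{Ric }\omega_\varepsilon \geq \min_i \beta_i \cdot \omega_\varepsilon$ and converging weakly to $\omega$.

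The second stage deforms the angle vector $(\beta_1,\dots,\beta_k)$ to a nearby $(\beta_1',\dots,\beta_k')$ along a smooth path. As in the single-divisor case, the essential tool is the openness of the set of solvable angle vectors, established via a priori $L^\infty$ bounds on the potentials of the smooth approximants $\omega_{\varphi}$, from which the higher-order (gradient, Laplacian, and then local Schauder) estimates away from the divisors follow by the standard elliptic machinery; the weak conical Kähler-Einstein metric at the deformed angles is then obtained as a weak limit. The openness of solvability is controlled by the \emph{log-Mabuchi-functional} (Definition 3.3), suitably generalized to carry a sum $\sum_i (1-\beta_i)$ of divisor contributions; its properness/coercivity along the path is what furnishes the uniform $L^\infty$ control, and the hypothesis that there is \emph{no holomorphic vector field tangential to any $D_i$} is exactly what guarantees the nondegeneracy (invertibility of the linearized operator at the level of the functional) needed to close the continuity argument — it plays the role that the absence of holomorphic vector fields plays in the single-divisor case, ruling out a kernel that would obstruct deforming the angles.

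The main obstacle I expect is obtaining the uniform $L^\infty$ estimate on the potentials in the simple normal crossing setting, where the local model near a crossing $D_i\cap D_j$ is a product of cones with \emph{two} distinct angles $2\pi\beta_i,2\pi\beta_j$, so the singularity structure is genuinely higher-codimensional and the Green's function / capacity estimates used in the smooth-divisor case do not transfer verbatim. Concretely, the difficulty is to make the $L^\infty$ bound independent of the regularization parameter $\varepsilon$ and uniform along the angle deformation, controlling the mutual interaction of the several divisor contributions; I would attack this by an Alexandrov–Bakelman–Pucci or Kołodziej-type pluripotential estimate applied to the smoothed Monge–Ampère equation, using that each $\beta_i\in(0,1)$ keeps the cone volume densities $\prod_i |s_i|^{2(\beta_i-1)}$ in $L^p$ for some $p>1$, so that the right-hand side of the complex Monge–Ampère equation has an $L^p$ bound uniform in $\varepsilon$ and in the angle parameters ranging over the compact interval $[\beta_i-\delta,\beta_i+\delta]$.

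Finally, once the $L^\infty$ and interior higher-order bounds are in place, I would verify that the limiting current $\omega_{\varphi_{\beta'}}$ is a genuine weak conical Kähler-Einstein metric in the sense of Definition 2.1: it is a Kähler current with bounded potential, it solves the deformed current equation with the prescribed angles $2\pi\beta_i'$ along each $D_i$, and the correct cone-angle asymptotics hold in the distributional sense. The fact that $\cup_i D_i$ is simple normal crossing is used here to ensure that the local product structure at crossings is compatible with the global cohomological normalization and that the weak limit indeed carries the right Lelong numbers along each component separately, completing the proof of openness of the multi-angle deformation.
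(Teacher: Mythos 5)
Your overall two-stage scheme (smooth approximation of the given weak conical metric, then a continuity path in the angles, with a generalized modified log-Mabuchi functional and Ko\l odziej's $L^p$ estimate, for which you correctly identify simple normal crossing as the reason the densities $\prod_i|S_i|^{2\beta_i-2}$ stay in $L^p$ for some $p>1$) is the same as the paper's, but two of your specific choices contain genuine gaps. First, the normalization. Writing the equation as $\text{Ric }\omega=\omega+\sum_i 2\pi(1-\beta_i)[D_i]$ is not available in general: keeping the metric in $2\pi c_1(X)$, the Einstein constant is forced by cohomology to be $\mu=1-\sum_i\lambda_i(1-\beta_i)$, which \emph{varies as the angles vary} and can be zero or negative since the $\lambda_i>0$ are arbitrary; your rescaling to constant $1$ presupposes $\mu>0$, and your claimed bound $\text{Ric }\omega_\epsilon\geq \min_i\beta_i\cdot\omega_\epsilon$ is false in general (the relevant constant is $t\leq\mu$, and the paper has to split the a priori estimates by the sign of the Ricci constant: maximum principle/Ko\l odziej when it is nonpositive, Moser iteration with uniform Sobolev and Poincar\'e constants when it is positive). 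Related to this, since the continuity parameter $t$ \emph{is} the Einstein constant, a single path cannot realize an arbitrary deformation of the $k$-tuple $(\beta_1,\dots,\beta_k)$ --- for instance one that leaves $\mu$ unchanged; the paper deforms one angle at a time through $k$ successive continuity paths, each ending at a new weak conical K\"ahler-Einstein metric which seeds the next path. Your ``deform the angle vector along a smooth path'' glosses over exactly this point.

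Second, and more seriously, you have misplaced the role of the hypothesis that no holomorphic vector field is tangential to any $D_i$, and with it you have omitted the key mechanism producing uniform (in $\epsilon$) estimates. For the smoothed paths the linearized operator is $\Delta_{\omega_{u_{\epsilon,t}}}+t$, which is invertible simply because the smoothed forms $\chi_\epsilon^i$ are strictly positive, so Lichnerowicz gives a strict spectral gap; no vector field hypothesis enters there, and no implicit function theorem for singular metrics is used anywhere --- bypassing it is the point of the method. Coercivity of the modified functional $\widetilde{E}_{\epsilon,\mu'}$ only yields uniform bounds for $t\leq\mu'<\mu$. To reach $t=\mu$, and then to perturb the angles with bounds independent of $\epsilon$, the paper runs a contradiction argument: normalize by the $I$-functional, extract a limit of the potentials, and identify that limit with the given weak conical K\"ahler-Einstein metric via the uniqueness theorem for weak K\"ahler-Einstein metrics (Theorem 5.1 of \cite{BBEGZ}; Berndtsson in the single-divisor case). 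It is precisely in this identification that the automorphism furnished by uniqueness must preserve each current $2\pi[D_i]$ (via the Siu decomposition and Lelong numbers), hence be generated by a vector field tangential to every $D_i$, hence be the identity by your hypothesis. Without this step your proposal has no source for the uniform $L^\infty$ bound at the endpoint and along the angle perturbation, so the final limiting procedure cannot be carried out.
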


\noindent $\mathbf{Acknowledgement}:$ The author is very grateful to his advisor, Professor Xiuxiong Chen, for introducing this problem about openness and giving insight on the possibility of bypassing the implicit function theorem for singular metrics. Great thanks also go to Dr.  Song Sun, Yuanqi Wang and Long Li for helpful discussions and useful suggestions.

\section{Setting Up Continuity Paths}\label{sect2}

Let $X$ be a smooth Fano manifold, fix a smooth background K\"ahler metric $\omega_0$ in $2\pi c_1(X)$, and write $h_{\omega_0}$ as its Ricci potential (with the normalization $\sup_X h_{\omega_0}=0$). Let $S$ be a holomorphic defining section of a smooth divisor $D\in |-K_{X}|$ and let $h$ be a smooth Hermitian metric on the line bundle $L_D$ defined by $D$ with curvature $\omega_0$. Let $[D]$ denote the \emph{current of integration} along $D$. We first define the notion of \emph{weak conical K\"ahler-Einstein metric}:

\begin{defn}[Weak Conical K\"ahler-Einstein Metric]\label{def2-1}
Let  $\omega_{\varphi_\beta}=\omega_0+\sqrt{-1}\partial\bar\partial \varphi_\beta$ for $\varphi_\beta\in C^\infty(X\backslash D)\cap C^0(X)$ be a smooth K\"ahler metric on $X\backslash D$ with bounded local K\"ahler potential near $D$. We say $\omega_{\varphi_\beta}$ is a \emph{weak conical K\"ahler-Einstein metric} with angle $2\pi\beta$ along $D$ if the following complex Monge-Amp$\grave{\text{e}}$re equation:

$$\omega_{\varphi_\beta}^n=e^{-\beta\varphi_\beta+h_{\omega_0}}\frac{\omega_0^n}{|S|_h^{2-2\beta}}.$$
is satisfied on $X\backslash D$, or equivalently the current equation 
$$\text{Ric }\omega_{\varphi_\beta}=\beta\omega_{\varphi_\beta}+2\pi(1-\beta)[D]$$
is satisfied on $X$.

\end{defn}

\begin{rmk}
The \emph{weak conical K\"ahler-Einstein metric} defined above is automatically quasi-isometric to a model conical K\"ahler metric near $D$ and has $C^{2,\alpha,\beta}$ regularity in the sense of Donaldson \cite{Don} by the regularity result of \cite{GP}. Also note that in \cite{Yao}, the smooth approximations obtained from the solutions of the smooth continuity paths are shown to have uniform $C^{1,1}$ bound. Therefore the \emph{weak conical K\"ahler-Einstein metric} has $C^{1,1}$ regularity, from which [CW] gives the $C^{2,\alpha,\beta}$ regularity. 
\end{rmk}

 By Poincar$\acute{\text{e}}$-Lelong's formula, $\chi=\omega_0+\sqrt{-1}\partial\bar{\partial} \log |S|_h^2=2\pi[D]$ as currents. We define a smooth approximation $\chi_\epsilon=\omega_0+\sqrt{-1}\partial\bar{\partial} \log (|S|_h^2+\epsilon)$ for $\epsilon\in (0,1]$.
 
Let $\omega_{\varphi_\beta}$ be a \emph{weak conical K\"ahler-Einstein metric}.  The authors of \cite{CDS1} used a smooth continuity path depending on two parameters to construct a family of smooth K\"ahler metrics (with Ricci curvature bounded below by $\beta$) to approximate $\omega_{\varphi_\beta}$ in the Gromov-Hausdorff sense. We adopt the same two parameter continuity path. Let us first recall the construction. Since the volume form $\omega_{\varphi_\beta}^n\in L^p(X, \omega_0^n)$ for $p\in [1,\frac{1}{1-\beta})$, we can choose a family of smooth volume forms $\eta_\epsilon$ with $\epsilon\in (0,1]$ approximating it in $L^p$, and then solve the Monge-Amp$\grave{\text{e}}$re equation 
\begin{equation}\label{eqn2-1}
\omega_{\varphi_\epsilon}^n=\eta_\epsilon
\end{equation}
by using the solution in \cite{Yau}.  Ko\l odziej's $L^p$ estimate \cite{Ko} asserts that $||\varphi_\epsilon||_{C^\gamma}$ is uniformly bounded for some $\gamma\in (0,1)$, and consequently $\varphi_\epsilon$ (normalized to have $0$ as supremum) subsequentially converges to some $\varphi_0$.  By a general uniqueness theorem  for bounded solutions of complex Monge-Amp$\grave{\text{e}}$re equations, $\varphi_0=\varphi_\beta+C$, and therefore we can assume $\varphi_\epsilon$ converges to $\varphi_\beta$ by passing to a subsequence.

Again from Yau's solution \cite{Yau}, we have smooth K\"ahler potentials $\psi_{\epsilon,\beta}$ such that 
\begin{equation}\label{eqn2-2}
\text{Ric }\omega_{\psi_{\epsilon,\beta}}=\beta \omega_{\varphi_\epsilon}+(1-\beta)\chi_\epsilon
\end{equation}
or equivalently satisfying Monge-Amp$\grave{\text{e}}$re equation

\begin{equation}\label{eqn2-3}
\omega_{\psi_{\epsilon,\beta}}^n=e^{-\beta\varphi_\epsilon+h_{\omega_0}}\frac{\omega_0^n}{(|S|_h^2+\epsilon)^{1-\beta}}
\end{equation}
It follows from Ko\l odziej's $L^p$ estimate \cite{Ko} that $||\psi_{\epsilon,\beta}||_{L^\infty(X)}$ is uniformly bounded.

Following \cite{CDS1} (equation 3.4), we then use the two parameter continuity path $\star_{\epsilon,t}^\beta$ with $\epsilon\in (0,1]$ and $t\in [0,\beta]$ to deform the K\"ahler metrics $\omega_{\psi_{\epsilon,\beta}}$:

\begin{equation*}
\star_{\epsilon,t}^\beta: \left\{  \begin{array}{ll}

          \text{Ric }\omega_{\phi_{\epsilon,t}^\beta}&=t \omega_{\phi_{\epsilon,t}^\beta}+(\beta-t)\omega_{\varphi_\epsilon}+(1-\beta)\chi_\epsilon \\
\phi_{\epsilon,0}^\beta&=\psi_{\epsilon,\beta}       
                   \end{array}\right. 
                   \end{equation*}

In \cite{CDS1}, the authors first use Donaldson's Openness Theorem to deform the cone angles of conical K\"ahler-Einstein metrics to show that the \emph{log-Mabuchi-functional} is coercive, which enables them to solve the above path for $t\in [0,\beta]$. However, in our situation, we can not use Donaldson's Openness Theorem to deform the cone angle and thus we lack of the ``coercivity'' of the \emph{log-Mabuchi-functional}. Instead, we use the \emph{modified log-Mabuchi-functional} (Definition \ref{defn3-3}) to establish the solvability for $t\in [0,\beta']$ for any parameter $\beta'<\beta$. Then we will argue by contradiction to solve $\star_{\epsilon,t}^\beta$ for $t\in [0,\beta]$ with a uniform $L^\infty$ bound on $\phi_{\epsilon,\beta}^\beta$. The next step is to start from $\phi_{\epsilon,\beta}^\beta$ and use the standard openness property for another smooth continuity path $\star_{\epsilon,t}$:

\begin{equation*}\star_{\epsilon,t}: \left\{ \begin{array}{ll}
    \text{Ric }\omega_{u_{\epsilon,t}}&=t\omega_{u_{\epsilon,t}}+(1-t)\chi_\epsilon\\
    u_{\epsilon,\beta}&=\phi_{\epsilon,\beta}^\beta
    \end{array} \right. 
    \end{equation*}

\noindent to deform the parameter $t$ to nearby $\beta'$ with uniform $L^\infty$ bound on $u_{\epsilon,\beta'}$. 
 
 \section{Solutions of $\star_{\epsilon,t}^\beta$ Up to $t=\beta$}
 
 First we need to recall some well-known functionals which are going to be used in this paper.
 
 \subsection{Functionals}
  
Fix $\omega_0$ to be a smooth K\"ahler metric on $X$.  The \emph{space of K\"ahler potentials} is 
$$\mathcal{H}=\{\phi\in C^{\infty}(X)|\omega_0+\sqrt{-1}\partial\bar\partial\phi>0\}$$
 

\begin{defn} \label{defn3-1}
 Let $\alpha\in 2\pi c_1(X)$ be a smooth closed $(1,1)$ form and $\chi$ be $2\pi[D]$ as before. For any $\phi\in \mathcal{H}$, take $\phi_t$ with $t\in [0,1]$ to be a smooth path in $\mathcal{H}$ connecting 
$0$ and $\phi$. All the functionals described below are defined by integration along the path $\phi_t$ ( it can be verified that the values do not depend on the particular choice of the path $\phi_t$).
\begin{equation}\label{}
 J_\alpha(\phi):=n\int_0^1 \mathrm{d}t \int_X \dot{\phi}_t (\alpha-\omega_{\phi_t})\wedge \omega_{\phi_t}^{n-1}
 \end{equation}
\begin{equation}\label{}
J_{\chi}(\phi)=2\pi n\int_{0}^1\mathrm{d}t\int_{D} \dot{\phi}_t\omega_{\phi_t}^{n-1}-n\int_0^1\mathrm{d}t\int_{X}\dot{\phi}_t\omega_{\phi_t}^{n}
\end{equation}
\text{[Mabuchi-functional]}
\begin{equation}\label{}
E(\phi)=-n\int_{0}^1\mathrm{d}t\int\dot{\phi}_t(\text{Ric } \omega_{\phi_t}-\omega_{\phi_t})\wedge\omega_{\phi_t}^{n-1}
\end{equation}
[log-Mabuchi-functional]
\begin{equation}\label{defn3-3}
E_{(1-\beta)D}(\phi)=E(\phi)+(1-\beta)J_\chi(\phi)
\end{equation}
\end{defn}

The functional $J_\alpha$ was introduced by Sz$\acute{\text{e}}$kelyhidi \cite{Sz} to study the Aubin-Yau continuity path initiated from different $(1,1)$ forms in $2\pi c_1(X)$. The \emph{Mabuchi-functional} was introduced by Mabuchi \cite{Ma} to study the K\"ahler-Einstein problem. The \emph{log-Mabuchi-functional} was used to study \emph{conical K\"ahler-Einstein metrics}, see \cite{Berm, BBEGZ, SW, LS, CDS1}. The notation here is the same as in \cite{CDS1}. In \cite{CDS1}, a smooth approximation with $\epsilon\in (0,1]$ of the \emph{log-Mabuchi-functional} was introduced to study the corresponding smooth continuity path:

\begin{equation}\label{}
E_{\epsilon,(1-\beta)D}(\phi):=E(\phi)+(1-\beta)J_{\chi_\epsilon}(\phi)
\end{equation}
where 
$$J_{\chi_\epsilon}(\phi)=n\int_{0}^1\mathrm{d}t\int_{X} \dot{\phi}_t(\chi_\epsilon-\omega_{\phi_t})\wedge\omega_{\phi_t}^{n-1}$$

Take $\alpha=\omega_0$, then
 $$J_{\omega_0}(\phi)=(I-J)(\phi)=\frac{1}{n+1}\sum_{i=0}^n\int_X \phi \omega_\phi^i\wedge\omega_0^{n-i}-\int_X \phi\omega_\phi^n$$ where $I$ and $J$ are the classical functionals defined by Aubin \cite{Aubin}, 
  $$I(\phi):=\int_X \phi\omega_0^n-\int_X\phi \omega_\phi^n;$$
 $$J(\phi):=\int_X \phi\omega_0^n-\frac{1}{n+1}\sum_{i=0}^n \int_X\phi \omega_\phi^i\wedge\omega_0^{n-i}$$
There is an easy comparison according to \cite{Aubin}:
 \begin{equation}\label{inequ3-6}
 \frac{1}{n+1}I(\phi)\leq J_{\omega_0}(\phi)\leq I(\phi)
 \end{equation}

The following proposition lists the properties needed in this paper.

\begin{prop}\label{prop3-3}
1. (\cite{Sz}) If $\alpha'=\alpha+\sqrt{-1}\partial \bar{\partial} \psi$, then 
 $$J_{\alpha'}(\phi)-J_{\alpha}(\phi)=\int_X \psi (\omega_{\phi}^n-\omega_0^n)$$
 2.(\cite{Chen}) The Mabuchi-functional has an well-known explicit formula:
 $$E(\phi)=\int_X \log \frac{\omega_\phi^n}{e^{h_{\omega_0}}\omega_0^n} \omega_\phi^n-J_{\omega_0}(\phi)+\int_X h_{\omega_0}\omega_0^n$$
 3.(\cite{Tian}) Let $\omega_0$ and $\omega_{\phi}=\omega_0+\sqrt{-1}\partial\bar\partial \phi$ be two smooth K\"ahler metrics on $X$, and let $C_P, C_S$ be the corresponding Poincar$\acute{\text{e}}$ and Sobolev constants. Then for some $\delta_n>0$, which is a dimensional constant, the following estimate holds:
 $$\text{Osc } \phi\leq \{C_S(\omega_0)^{\delta_n}C_P(\omega_0)+C_S(\omega_\phi)^{\delta_n}C_P(\omega_\phi)\} J_{\omega_0}(\phi)+C_S(\omega_0)^{\delta_n}+C_S(\omega_\phi)^{\delta_n}$$
 4.\cite{Chern, Lu}\label{CL}
Suppose $\omega$ and $\eta$ are two K\"{a}hler metrics on a compact K\"{a}hler manifold, if $\text{Ric }\omega\geq C_1\omega-C_2 \eta$ and the holomorphic bisectional curvature $R^\eta_{i\bar{j}k\bar{l}}\leq C_3(h_{i\bar{j}}h_{k\bar{l}}+h_{i\bar{l}}h_{k\bar{j}})$, then $$\Delta_\omega \log \text{tr}_\omega \eta \geq C_1-(C_2+2C_3)\text{tr}_\omega \eta.$$
 \end{prop}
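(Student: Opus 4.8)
The plan is to treat the four items separately, since they are independent classical facts (each with the indicated reference); items 1 and 2 are purely variational identities, item 3 is an analytic oscillation bound of Moser-iteration type, and item 4 is a pointwise Bochner--Chern--Lu computation. For item 1 I would work directly from the defining integral of $J_\alpha$. Since the value is path-independent, fix a smooth path $\phi_t$ from $0$ to $\phi$ and write the difference as
$$J_{\alpha'}(\phi)-J_\alpha(\phi)=n\int_0^1\mathrm{d}t\int_X\dot\phi_t(\alpha'-\alpha)\wedge\omega_{\phi_t}^{n-1}=n\int_0^1\mathrm{d}t\int_X\dot\phi_t\,\sqrt{-1}\partial\bar\partial\psi\wedge\omega_{\phi_t}^{n-1}.$$
Integrating by parts to move $\sqrt{-1}\partial\bar\partial$ from $\psi$ onto $\dot\phi_t$, and observing that $n\sqrt{-1}\partial\bar\partial\dot\phi_t\wedge\omega_{\phi_t}^{n-1}=\frac{\mathrm{d}}{\mathrm{d}t}\omega_{\phi_t}^n$, turns the integrand into $\psi\,\frac{\mathrm{d}}{\mathrm{d}t}\omega_{\phi_t}^n$; the $t$-integral then collapses to $\int_X\psi(\omega_\phi^n-\omega_0^n)$.

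For item 2 I would verify the formula by the standard ``initial value plus derivative'' argument. Write $\tilde E(\phi)$ for the claimed right-hand side. At $\phi=0$ one checks $\tilde E(0)=0=E(0)$ (the entropy term contributes $-\int_X h_{\omega_0}\omega_0^n$, which cancels the last term, and $J_{\omega_0}(0)=0$). It then suffices to match derivatives along $\phi_t$. Differentiating the entropy term $\int_X\log\frac{\omega_\phi^n}{e^{h_{\omega_0}}\omega_0^n}\omega_\phi^n$ and using $\frac{\mathrm{d}}{\mathrm{d}t}\log\omega_{\phi_t}^n=\Delta_{\phi_t}\dot\phi_t$, self-adjointness of $\Delta_{\phi_t}$, and the identity $\sqrt{-1}\partial\bar\partial\log\frac{\omega_{\phi_t}^n}{\omega_0^n}=\mathrm{Ric}\,\omega_0-\mathrm{Ric}\,\omega_{\phi_t}$ produces the contribution $n\int_X\dot\phi_t(\omega_0-\mathrm{Ric}\,\omega_{\phi_t})\wedge\omega_{\phi_t}^{n-1}$, while $-\frac{\mathrm{d}}{\mathrm{d}t}J_{\omega_0}(\phi_t)$ produces $n\int_X\dot\phi_t(\omega_{\phi_t}-\omega_0)\wedge\omega_{\phi_t}^{n-1}$. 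Summing gives exactly $-n\int_X\dot\phi_t(\mathrm{Ric}\,\omega_{\phi_t}-\omega_{\phi_t})\wedge\omega_{\phi_t}^{n-1}$, which is the integrand defining $E$.

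For item 3, Tian's oscillation estimate, I would bound $\sup_X\phi$ and $-\inf_X\phi$ separately, each in terms of $J_{\omega_0}(\phi)$ together with the geometry of the two metrics. The mechanism is Moser iteration applied to the positive and negative parts of $\phi$: starting from the mean-value relations comparing $\int_X\phi\,\omega_0^n$ and $\int_X\phi\,\omega_\phi^n$ (essentially $I(\phi)$, hence comparable to $J_{\omega_0}(\phi)$ by \eqref{inequ3-6}), one runs an $L^p\to L^\infty$ scheme in which the Sobolev inequality of $(X,\omega_0)$ and of $(X,\omega_\phi)$ feeds the iteration and the Poincar\'e inequality controls the mean-value correction; this is precisely why the constant appears as $C_S^{\delta_n}C_P$ for both metrics. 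I expect this to be the main obstacle among the four, since the iteration must be carried out with explicit, scale-invariant dependence on $C_S$ and $C_P$; it is, however, exactly the content of \cite{Tian} and can be quoted.

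For item 4, the Chern--Lu inequality, I would fix a point and choose coordinates normal for $\omega$ in which $\eta$ is simultaneously diagonal. Writing $s=\mathrm{tr}_\omega\eta$ and using
$$\Delta_\omega\log s=\frac{\Delta_\omega s}{s}-\frac{|\partial s|_\omega^2}{s^2},$$
the computation of $\Delta_\omega s$ by commuting covariant derivatives brings in the Ricci curvature of $\omega$ with a favorable sign and the bisectional curvature of $\eta$ with an unfavorable sign, together with gradient terms. A Cauchy--Schwarz estimate absorbs the gradient terms into $|\partial s|_\omega^2/s^2$, and substituting the hypotheses $\mathrm{Ric}\,\omega\geq C_1\omega-C_2\eta$ and $R^\eta_{i\bar jk\bar l}\leq C_3(h_{i\bar j}h_{k\bar l}+h_{i\bar l}h_{k\bar j})$ yields $\Delta_\omega\log\mathrm{tr}_\omega\eta\geq C_1-(C_2+2C_3)\mathrm{tr}_\omega\eta$. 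The only delicate bookkeeping is the sign and coefficient of the curvature-of-$\eta$ term, which is where the factor $2C_3$ originates; this is the classical Yau--Schwarz-lemma computation of \cite{Chern,Lu}.
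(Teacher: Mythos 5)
Your proposal is essentially correct, but note that the paper itself gives no proof of this proposition: all four items are stated as known classical results and simply attributed to the references \cite{Sz}, \cite{Chen}, \cite{Tian} and \cite{Chern, Lu}, so there is no internal argument of the paper to compare yours against. Judged on their own terms, your items 1 and 2 are complete and correct variational computations: the path-independence asserted in Definition \ref{defn3-1} lets you fix a path, and the two key identities
$$n\sqrt{-1}\partial\bar\partial\dot\phi_t\wedge\omega_{\phi_t}^{n-1}=\frac{\mathrm{d}}{\mathrm{d}t}\omega_{\phi_t}^n,
\qquad
\sqrt{-1}\partial\bar\partial\Bigl(\log\tfrac{\omega_{\phi_t}^n}{\omega_0^n}-h_{\omega_0}\Bigr)=\omega_0-\mathrm{Ric}\,\omega_{\phi_t}$$
do exactly what you claim; the second identity uses the Ricci-potential equation $\sqrt{-1}\partial\bar\partial h_{\omega_0}=\mathrm{Ric}\,\omega_0-\omega_0$, which you invoke only implicitly and should state, since it is precisely where the cancellation producing $\omega_0-\mathrm{Ric}\,\omega_{\phi_t}$ (rather than $\mathrm{Ric}\,\omega_0-\mathrm{Ric}\,\omega_{\phi_t}$) comes from. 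Your items 3 and 4 are correct proof skeletons with the hard estimates deferred to the cited sources, which matches the paper's own level of detail. For item 4 your bookkeeping of the constant is right: contracting the form inequality $\mathrm{Ric}\,\omega\geq C_1\omega-C_2\eta$ against $\eta$ (through two inverse copies of $\omega$) gives $C_1\,\mathrm{tr}_\omega\eta-C_2|\eta|_\omega^2\geq C_1\,\mathrm{tr}_\omega\eta-C_2(\mathrm{tr}_\omega\eta)^2$, the symmetrized bisectional bound gives $-C_3\bigl((\mathrm{tr}_\omega\eta)^2+|\eta|_\omega^2\bigr)\geq-2C_3(\mathrm{tr}_\omega\eta)^2$, and dividing by $\mathrm{tr}_\omega\eta$ (after Cauchy--Schwarz has absorbed the gradient terms) yields $C_1-(C_2+2C_3)\mathrm{tr}_\omega\eta$ exactly as stated.
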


As explained in the introduction, in order to solve $\star_{\epsilon,t}^\beta$, we need to introduce the \emph{modified log-Mabuchi-functional} by adding an extra term to achieve \emph{coercivity}. 

\begin{defn}{[modified log-Mabuchi-functional]}\label{defn3-3}
For any $\beta'<\beta$ and $\epsilon\in (0,1]$, define
\begin{equation*}
\widetilde{E}_{\epsilon,\beta'}:=E_{\epsilon,(1-\beta)D}+(\beta-\beta')J_{\omega_{\varphi_\epsilon}}
\end{equation*}
where $\varphi_\epsilon$ is the solution to equation \ref{eqn2-1}. 
\end{defn}

Notice that this modified functional depends on the choice of $\varphi_\epsilon$ and later we will see that its precise form is made such that it is decreasing along the continuity path $\star_{\epsilon,t}^\beta$ for $t\in [0,\beta']$. The following lemma shows that this \emph{modified log-Mabuchi-functional} is coercive.

\begin{prop}\label{lem3-4}
For any fixed $\beta'<\beta$, there exists $C=C_{\beta'}$ independent of $\epsilon$ such that for all $\epsilon\in (0,1]$:
$$\widetilde{E}_{\epsilon,\beta'}\geq (\beta-\beta')J_{\omega_0}-C$$
\end{prop}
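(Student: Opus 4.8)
The plan is to read the definition of $\widetilde E_{\epsilon,\beta'}$ as a \emph{boundedness-below} statement plus a genuinely coercive correction. The added term $(\beta-\beta')J_{\omega_{\varphi_\epsilon}}$ is, up to an $\epsilon$-independent constant, the positive multiple $(\beta-\beta')J_{\omega_0}$ appearing on the right-hand side; the whole purpose of the modification is to convert a lower bound on the unmodified functional into coercivity. Accordingly I would reduce the proposition to three facts, each uniform in $\epsilon\in(0,1]$ and $\phi\in\mathcal H$:
\begin{align*}
\text{(i)}\quad & J_{\omega_{\varphi_\epsilon}}(\phi)\ \geq\ J_{\omega_0}(\phi)-C_1;\\
\text{(ii)}\quad & E_{\epsilon,(1-\beta)D}(\phi)\ \geq\ E_{(1-\beta)D}(\phi)-C_2;\\
\text{(iii)}\quad & E_{(1-\beta)D}(\phi)\ \geq\ -C_3.
\end{align*}
Granting these and recalling $\widetilde E_{\epsilon,\beta'}=E_{\epsilon,(1-\beta)D}+(\beta-\beta')J_{\omega_{\varphi_\epsilon}}$, one obtains at once $\widetilde E_{\epsilon,\beta'}(\phi)\geq (\beta-\beta')J_{\omega_0}(\phi)-C$ with $C=C_1+C_2+C_3$.

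For (i) I would apply Proposition~\ref{prop3-3}(1) with $\alpha=\omega_0$ and $\alpha'=\omega_{\varphi_\epsilon}=\omega_0+\sqrt{-1}\partial\bar\partial\varphi_\epsilon$, obtaining $J_{\omega_{\varphi_\epsilon}}(\phi)-J_{\omega_0}(\phi)=\int_X\varphi_\epsilon(\omega_\phi^n-\omega_0^n)$; since the two volumes agree and $\|\varphi_\epsilon\|_{L^\infty}$ is bounded uniformly in $\epsilon$ (Ko\l odziej's estimate for \eqref{eqn2-1}), the right-hand side is $\geq -2\|\varphi_\epsilon\|_{L^\infty}\int_X\omega_0^n\geq -C_1$. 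For (ii), writing $\chi_\epsilon=\omega_0+\sqrt{-1}\partial\bar\partial\log(|S|_h^2+\epsilon)$ and $\chi=\omega_0+\sqrt{-1}\partial\bar\partial\log|S|_h^2$ and invoking the bounded-potential extension of Proposition~\ref{prop3-3}(1), I find
$$J_{\chi_\epsilon}(\phi)-J_\chi(\phi)=\int_X\log\Big(1+\tfrac{\epsilon}{|S|_h^2}\Big)\big(\omega_\phi^n-\omega_0^n\big).$$
The integrand is nonnegative, so its $\omega_\phi^n$-part is $\geq 0$, while $\int_X\log(1+\epsilon/|S|_h^2)\,\omega_0^n$ is uniformly bounded because $\log(1+|S|_h^{-2})\in L^1(\omega_0^n)$ dominates it and it tends to $0$ as $\epsilon\to0$. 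Hence $J_{\chi_\epsilon}(\phi)\geq J_\chi(\phi)-C_2'$, and multiplying by $(1-\beta)$ gives (ii) with $C_2=(1-\beta)C_2'$.

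The crux, and the main obstacle, is (iii): that the log-Mabuchi functional $E_{(1-\beta)D}$ is bounded below over $\mathcal H$. This is exactly the step in which the standing hypothesis — that a weak conical K\"ahler-Einstein metric $\omega_{\varphi_\beta}$ \emph{exists} — must be used, and it cannot be circumvented by soft estimates: combining Chen's formula (Proposition~\ref{prop3-3}(2)) with the positivity of relative entropy yields only $E_{(1-\beta)D}(\phi)\geq -\beta J_{\omega_0}(\phi)-C$, which is useless since $J_{\omega_0}\geq 0$ is unbounded. Instead I would argue that $\omega_{\varphi_\beta}$, being a solution of the weak conical K\"ahler-Einstein equation, is the Euler-Lagrange critical point of $E_{(1-\beta)D}$, and that by the geodesic convexity of the log-Mabuchi functional (Berndtsson's theorem) together with its variational characterization in the finite-energy class $\mathcal E^1$ (\cite{BBEGZ, Berm}), such a critical point is a global minimizer. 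Since $\mathcal H\subset\mathcal E^1$ and the path-integral definition used here agrees with the variational one by continuity, this gives $E_{(1-\beta)D}(\phi)\geq E_{(1-\beta)D}(\varphi_\beta)=:-C_3$ for all smooth $\phi$. With (i)--(iii) in hand the three estimates combine as above; note that the resulting coercivity constant is $(\beta-\beta')$, which degenerates as $\beta'\uparrow\beta$ but stays positive for each fixed $\beta'<\beta$, exactly as is needed to run $\star_{\epsilon,t}^\beta$ up to $t=\beta'$.
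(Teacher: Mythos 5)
Your proposal is correct and follows essentially the same route as the paper's proof: the same decomposition of $\widetilde{E}_{\epsilon,\beta'}$ into the log-Mabuchi functional $E_{(1-\beta)D}$, the two error terms $(1-\beta)(J_{\chi_\epsilon}-J_\chi)$ and $(\beta-\beta')(J_{\omega_{\varphi_\epsilon}}-J_{\omega_0})$, plus the coercive piece $(\beta-\beta')J_{\omega_0}$, with each term bounded below uniformly in $\epsilon$ exactly as in the paper (Ko{\l}odziej's $L^\infty$ bound on $\varphi_\epsilon$, integrability of $\log|S|_h^2$, and the lower bound on $E_{(1-\beta)D}$ from \cite{Berm, BBEGZ} via the existence of $\omega_{\varphi_\beta}$). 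Your extra sketch of why the cited lower bound on $E_{(1-\beta)D}$ holds (critical point plus convexity implies global minimizer) is just an unpacking of the reference the paper cites, not a different argument.
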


\begin{proof} 
Rewrite the functional as following:
$$\widetilde{E}_{\epsilon,\beta'}=E+(1-\beta)J_{\chi_\epsilon}+(\beta-\beta')J_{\omega_{\varphi_\epsilon}}$$
$$=E_{(1-\beta)D}+(1-\beta)(J_{\chi_\epsilon}-J_\chi)+(\beta-\beta')(J_{\omega_{\varphi_\epsilon}}-J_{\omega_0})+(\beta-\beta')J_{\omega_0}.$$

\begin{itemize}

\item The first term $E_{(1-\beta)D}$ is the \emph{log-Mabuchi-functional}. It follows from \cite{Berm, BBEGZ} that $E_{(1-\beta)D}$ is bounded from below under the assumption that there exists a \emph{weak conical K\"ahler-Einstein metric} $\omega_{\varphi_\beta}$;
 \item The first error term $J_{\chi_\epsilon}-J_\chi $ is bounded from below by the following computation (see \cite{CDS1} for more detailed calculation):

\begin{align*}
(J_{\chi_\epsilon}-J_\chi)(\phi)&=\int_X \{\log (|S|_h^2+\epsilon)-\log |S|_h^2\}(\omega_\phi^n-\omega_0^n)\\
&\geq \int_X \{\log |S|_h^2-\log (|S|_h^2+\epsilon)\}\omega_0^n\\
&\geq -\sup_X \log (|S|_h^2+1)+\int_X \log |S|_h^2\omega_0^n
\end{align*}

\item The second error term $J_{\omega_{\varphi_\epsilon}}-J_{\omega_0}$, whose formula appears in the first part of Proposition \ref{prop3-3}, is bounded from below: 
\begin{align*}
(J_{\omega_{\varphi_\epsilon}}-J_{\omega_0})(\phi)&=\int_X \varphi_\epsilon(\omega_\phi^n-\omega_0^n)\\
&\geq -2 ||\varphi_\epsilon||_{L^\infty(X)}
\end{align*}
\end{itemize}

\noindent Therefore, if $C$ is chosen to be 
$$C=2(\beta-\beta')||\varphi_\epsilon||_{L^\infty(X)}-(1-\beta)\int_X \log |S|_h^2 \omega_0^n+(1-\beta)\sup_X \log(|S|_h^2+1),$$
the coercivity of $\widetilde{E}_{\epsilon,\beta'}$  in the lemma holds.
\end{proof}


In the next section, we will try to solve $\star_{\epsilon,t}^\beta$ for $t\in [0,\beta'],\epsilon\in (0,1]$ with a uniform $L^\infty$ bound on $\phi_{\epsilon,t}^\beta$. 

 \subsection{Solution for $\star_{\epsilon,t}^\beta$ with $t\in [0,\beta']$ and $\epsilon\in (0,1]$}
 
We need two lemmas which enable us to obtain uniform estimates  of $\phi_{\epsilon,t}^\beta$ along the continuity path $\star_{\epsilon,t}^\beta$. The first lemma appeared in the work of \cite{JMR} whose proof is based on the Chern-Lu inequality and is given here for completeness. 
\begin{lem}\cite{JMR}\label{lem3-6}
There exists $C=C_{A}$ such that if $\star_{\epsilon,t}^\beta$ have solution $\phi_{\epsilon,t}^\beta$ with
$\text{Osc }\phi_{\epsilon,t}^\beta\leq A$, then 
$$\omega_{\phi_{\epsilon,t}^\beta}\geq C^{-1}\omega_0$$
\end{lem}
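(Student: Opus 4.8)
The plan is to bound $\omega_{\phi_{\epsilon,t}^\beta}$ from below by controlling the trace $\text{tr}_{\omega_{\phi_{\epsilon,t}^\beta}}\omega_0$ from above, since an upper bound $\text{tr}_{\omega_\phi}\omega_0\le C$ forces every eigenvalue of $\omega_\phi$ relative to $\omega_0$ to be at least $C^{-1}$, i.e. $\omega_\phi\ge C^{-1}\omega_0$. Write $\omega=\omega_{\phi_{\epsilon,t}^\beta}$ and $\phi=\phi_{\epsilon,t}^\beta$. The first step is to read off a Ricci lower bound from $\star_{\epsilon,t}^\beta$. Since $t\in[0,\beta]$ forces $\beta-t\ge 0$, since $\omega_{\varphi_\epsilon}>0$, and since $1-\beta>0$, the equation gives $\text{Ric }\omega=t\omega+(\beta-t)\omega_{\varphi_\epsilon}+(1-\beta)\chi_\epsilon\ge(1-\beta)\chi_\epsilon$. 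The essential structural input is then the positivity $\chi_\epsilon\ge 0$: writing $f=|S|_h^2$ and using that $h$ has curvature $\omega_0$ (so $\sqrt{-1}\partial\bar\partial\log f=-\omega_0$ off $D$, whence $\sqrt{-1}\partial\bar\partial f=-f\omega_0+f^{-1}\sqrt{-1}\partial f\wedge\bar\partial f$), a direct computation yields the identity $\chi_\epsilon=\frac{\epsilon}{f+\epsilon}\omega_0+\frac{\epsilon}{f(f+\epsilon)^2}\sqrt{-1}\partial f\wedge\bar\partial f\ge 0$, uniformly in $\epsilon$. Consequently $\text{Ric }\omega\ge 0$, so the hypothesis of Proposition \ref{prop3-3}(4) holds with $\eta=\omega_0$ and $C_1=C_2=0$.

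Next I would apply the Chern--Lu inequality, Proposition \ref{prop3-3}(4), with $\eta=\omega_0$, whose holomorphic bisectional curvature is bounded above by a fixed constant $C_3=C_3(\omega_0)$ that is independent of $\epsilon$ and $t$. This gives $\Delta_\omega\log\text{tr}_\omega\omega_0\ge -2C_3\,\text{tr}_\omega\omega_0$. Setting $u=\text{tr}_\omega\omega_0$ and using $\Delta_\omega\phi=\text{tr}_\omega(\omega-\omega_0)=n-u$, I consider the test function $G=\log u-A'\phi$ with $A'=2C_3+1$, so that $\Delta_\omega G\ge(A'-2C_3)u-A'n=u-A'n$. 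Because all the data $\chi_\epsilon,\omega_{\varphi_\epsilon}$ are smooth for $\epsilon>0$, the metric $\omega$ is a genuine smooth K\"ahler metric on the compact manifold $X$, so $G$ attains its maximum at some $p\in X$; there $\Delta_\omega G\le 0$ forces $u(p)\le A'n$. Then $G(x)\le G(p)$ together with $\text{Osc }\phi\le A$ gives $\log u(x)\le\log(A'n)+A'(\phi(x)-\phi(p))\le\log(A'n)+A'A$, hence $\text{tr}_\omega\omega_0\le A'n\,e^{A'A}=:C$, and therefore $\omega\ge C^{-1}\omega_0$.

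The two points that require care, and where the argument genuinely uses the structure of the problem, both lie in the first step. First, the bound $\chi_\epsilon\ge 0$ must be \emph{uniform in} $\epsilon$; it is precisely the cancellation that leaves both resulting terms nonnegative which makes the whole estimate $\epsilon$-independent, exactly what is needed for the uniform bounds used later. Second, it is crucial that the curvature constant $C_3$ entering Chern--Lu is that of the \emph{fixed} background metric $\omega_0$ rather than of the possibly degenerating solution $\omega$; this is the reason one traces $\omega_0$ against $\omega$ and not the reverse. Beyond these bookkeeping points I do not expect a genuine analytic obstacle: the maximum principle applies directly because the approximating metrics are smooth, and the final constant $C=C_A$ depends only on $A$, on $n$, and on $C_3(\omega_0)$, hence is independent of both $\epsilon$ and $t$, as the statement requires.
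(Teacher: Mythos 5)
Your proof is correct and follows essentially the same route as the paper's: the Chern--Lu inequality (Proposition \ref{prop3-3}, part 4) applied with the fixed background metric $\omega_0$ traced against the solution metric, the test function $\log \mathrm{tr}_{\omega}\,\omega_0-(2\Lambda+1)\phi$, and the maximum principle on the compact manifold $X$, yielding $\mathrm{tr}_{\omega}\,\omega_0\leq n(2\Lambda+1)e^{(2\Lambda+1)\mathrm{Osc}\,\phi}$. The only difference is that you explicitly verify $\chi_\epsilon\geq 0$ (and hence $\mathrm{Ric}\,\omega\geq 0$) by the direct computation $\chi_\epsilon=\frac{\epsilon}{f+\epsilon}\omega_0+\frac{\epsilon}{f(f+\epsilon)^2}\sqrt{-1}\partial f\wedge\bar\partial f$, a fact the paper uses without proof; this is a useful clarification but not a different argument.
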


\begin{proof}
By comparing the K\"ahler metric $\omega_{\phi_{\epsilon,t}^\beta}$ (denoted by $\omega_{\phi_\epsilon}$ for simplifying notation)  which has Ricci curvature bounded below by $0$ and the fixed smooth K\"ahler metric $\omega_0$ which has a fixed upper bound $\Lambda$ on the bisectional curvature, the Chern-Lu Inequality (see part $4$ of Proposition \ref{prop3-3})  tells that

$$\Delta_{\omega_{\phi_\epsilon}} \text{log } \text{tr}_{\omega_{\phi_\epsilon}}\omega_0 \geq -2\Lambda \text{tr}_{\omega_{\phi_\epsilon}}\omega_0.$$ 

\noindent By using the fact that $\Delta_{\omega_{\phi_\epsilon}}\phi_\epsilon=\text{tr}_{\omega_{\phi_\epsilon}} (\omega_{\phi_\epsilon}-\omega_0)=n-\text{tr}_{\omega_{\phi_\epsilon}}\omega_0$, we get the inequality

$$\Delta_{\omega_{\phi_\epsilon}} \{ \text{log } \text{tr}_{\omega_{\phi_\epsilon}} \omega_0 -(2\Lambda+1)\phi_\epsilon \} 
\geq \text{tr}_{\omega_{\phi_\epsilon}}\omega_0-n(2\Lambda+1).$$ 

\noindent The maximum principle on $X$ tells us $\text{tr}_{\omega_{\phi_\epsilon}}\omega_0 \leq \{n(2\Lambda+1)\} e^{(2\Lambda+1)\text{Osc }\phi_\epsilon}\leq C$. Thus the lower bound of $\omega_{\phi_\epsilon}$ claimed in this lemma is obtained.
\end{proof}
The second lemma is an application of Evans-Krylov's theorem \cite{Ev,Kr}  for $C^{2,\alpha}$ bound and higher order bounds.
\begin{lem}\label{lem3-7}
For any subset $K\subset\subset X\backslash D$, and $k\in \mathbb{N}$, there exists $C=C_{A,K,k}$ such that  if $\star_{\epsilon,t}^\beta$ have solutions $\phi_{\epsilon,t}^\beta$ with $\text{Osc }\phi_{\epsilon,t}^\beta\geq A$, then 
$$||\phi_{\epsilon,t}^\beta||_{C^k(K)}\leq C$$
\end{lem}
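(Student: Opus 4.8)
The plan is to derive interior higher-order estimates from the already-established two-sided metric bounds, and the key input is Lemma \ref{lem3-6} together with the local structure of the equations $\star_{\epsilon,t}^\beta$ away from $D$. First I would observe that on any compact $K\subset\subset X\backslash D$ the terms $\omega_{\varphi_\epsilon}$ and $\chi_\epsilon$ appearing on the right-hand side of the Ricci equation are smooth with bounds depending only on $K$ and $\epsilon$; but since we want constants depending only on $A,K,k$ and not on $\epsilon$, I must be careful to use only the $\epsilon$-uniform information available. The hypothesis $\text{Osc }\phi_{\epsilon,t}^\beta\geq A$ (presumably a typo for $\leq A$) feeds into Lemma \ref{lem3-6} to give a uniform lower bound $\omega_{\phi_{\epsilon,t}^\beta}\geq C^{-1}\omega_0$; combined with the oscillation bound this controls the metric from both sides once I also produce a uniform upper bound $\text{tr}_{\omega_0}\omega_{\phi_{\epsilon,t}^\beta}\leq C$ on $K$, which follows from the lower bound together with the explicit Monge-Amp\`ere volume form (the determinant $\omega_{\phi_{\epsilon,t}^\beta}^n/\omega_0^n$ is controlled on $K$ because $|S|_h^2$ is bounded below there).

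Next I would rewrite the path equation $\star_{\epsilon,t}^\beta$ in complex Monge-Amp\`ere form, exactly as equation \ref{eqn2-3} was obtained from \ref{eqn2-2}: integrating the Ricci equation gives
\begin{equation*}
\omega_{\phi_{\epsilon,t}^\beta}^n=e^{-t\phi_{\epsilon,t}^\beta+F_{\epsilon,t}}\,\omega_0^n
\end{equation*}
where $F_{\epsilon,t}$ is a right-hand side built from $h_{\omega_0}$, $\varphi_\epsilon$ and $\log(|S|_h^2+\epsilon)$, all of which are smooth and bounded in $C^k(K)$ on the fixed compact set $K$ uniformly in $\epsilon\in(0,1]$ (since $|S|_h^2\geq c_K>0$ there and $\|\varphi_\epsilon\|_{C^\gamma}$, in fact its higher norms on $K$, are controlled by Ko\l odziej/Yau). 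With the zeroth-order bound $\text{Osc }\phi_{\epsilon,t}^\beta\leq A$ and the uniform two-sided metric bounds in hand, this is a uniformly elliptic, uniformly concave complex Monge-Amp\`ere equation on $K$, so the Evans--Krylov theorem \cite{Ev,Kr} applies to give an interior $C^{2,\alpha}(K')$ estimate on a slightly smaller $K'$.

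Finally I would bootstrap: once $\phi_{\epsilon,t}^\beta\in C^{2,\alpha}$ with uniform bounds, the coefficients of the linearized equation lie in $C^\alpha$, the right-hand side is smooth, and Schauder theory combined with differentiating the equation and inducting on $k$ yields uniform $C^k(K)$ bounds, giving the claim with $C=C_{A,K,k}$. The main obstacle is ensuring every constant is genuinely independent of $\epsilon$: this forces me to keep all estimates interior to $X\backslash D$ and to verify that the right-hand-side data $F_{\epsilon,t}$, which degenerates as $\epsilon\to 0$ and as one approaches $D$, has $C^k(K)$ norms controlled uniformly in $\epsilon$ on the fixed $K$ staying away from $D$. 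This is the step where the Evans--Krylov input is applied, and also where one must confirm that the ellipticity constants coming from the two-sided metric bounds do not degenerate, so I would treat the $\epsilon$-uniformity of the coefficient and the lower metric bound as the crux of the argument.
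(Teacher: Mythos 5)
Your proposal is correct and follows essentially the same route as the paper's proof: feed the oscillation bound into Lemma \ref{lem3-6}, use the resulting metric lower bound together with the controlled volume ratio on $K$ (where $|S|_h^2$ is bounded below) to make the Monge-Amp\`ere equation uniformly elliptic with $C^\alpha$ right-hand side, then apply Evans--Krylov \cite{Ev,Kr} and bootstrap to get $C^k(K)$ bounds. You also correctly read the hypothesis $\text{Osc }\phi_{\epsilon,t}^\beta\geq A$ as a typo for $\leq A$, which is what the paper's argument (and Lemma \ref{lem3-6}) actually uses.
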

\begin{proof}
First we have the equation:
\begin{equation}
 \omega_{\phi_{\epsilon,t}^\beta}^n=e^{-t\phi_{\epsilon,t}^\beta-(\beta-t)\varphi_\epsilon+h_{\omega_0}}\frac{\omega_0^n}{(|S|_h^2+\epsilon)^{1-\beta}}
 \end{equation}
 By Lemma \ref{lem3-6} above, on any Euclidean ball $U$ inside $K$, the above equation reads as an equation of the type
 $$\omega_\phi^n=F\omega_0^n$$
 where $||F||_{C^\alpha}$ and $\Delta_{\omega_0}\phi$ are uniformly bounded. By the standard Evans-Krylov theorem \cite{Ev,Kr}, 
 $$||\phi||_{C^{2,\alpha}}\leq C$$ 
 and the higher order bound follows from a standard bootstrapping argument.
\end{proof}

\begin{prop}\label{prop3-7}
For any fixed $\beta'<\beta$, $\star_{\epsilon,t}^\beta$ is solvable for $t\in [0,\beta'],\epsilon\in (0,1]$  and there exists $C=C_{\beta'}$ such that 
 $$||\phi_{\epsilon,\beta'}^\beta||_{L^\infty(X)}\leq C$$ 
 \end{prop}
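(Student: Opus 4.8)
The plan is to fix $\epsilon\in(0,1]$ and solve $\star_{\epsilon,t}^\beta$ by the continuity method in the parameter $t$, extracting all estimates with constants independent of both $t\in[0,\beta']$ and $\epsilon$. Write $\phi=\phi_{\epsilon,t}^\beta$ and $\omega_\phi=\omega_{\phi_{\epsilon,t}^\beta}$ for short, and let $T\subset[0,\beta']$ be the set of $t$ for which $\star_{\epsilon,t}^\beta$ has a smooth solution. Then $0\in T$: at $t=0$ the equation is precisely the prescribed--Ricci equation \ref{eqn2-2} with solution $\psi_{\epsilon,\beta}=\phi_{\epsilon,0}^\beta$. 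It remains to show $T$ is open and closed in $[0,\beta']$; since $[0,\beta']$ is connected this gives solvability, and the uniform $L^\infty$ bound at $t=\beta'$ will fall out of the a priori estimates used for closedness.

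For openness I differentiate the Monge--Amp\`ere form $\omega_\phi^n=e^{-t\phi-(\beta-t)\varphi_\epsilon+h_{\omega_0}}\omega_0^n/(|S|_h^2+\epsilon)^{1-\beta}$ in the direction $\dot\phi$ at fixed $t$; the linearized operator is $\Delta_{\omega_\phi}+t$. Along the path one has $\text{Ric }\omega_\phi=t\omega_\phi+(\beta-t)\omega_{\varphi_\epsilon}+(1-\beta)\chi_\epsilon\geq t\omega_\phi$, using $\omega_{\varphi_\epsilon}>0$, $\beta-t\geq0$, and the elementary computation $\chi_\epsilon\geq\frac{\epsilon}{|S|_h^2+\epsilon}\omega_0\geq0$; for $t<\beta$ this lower bound is strict. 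The K\"ahler Bochner--Lichnerowicz estimate then forces the first nonzero eigenvalue of $-\Delta_{\omega_\phi}$ to exceed $t$, so $\Delta_{\omega_\phi}+t$ has trivial kernel, is invertible on the usual H\"older spaces, and the implicit function theorem gives openness at interior $t$; openness at $t=0$ is the standard treatment of the Aubin--Yau path.

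Closedness rests on a uniform oscillation bound, which is the heart of the argument. The key is the monotonicity for which $\widetilde{E}_{\epsilon,\beta'}$ was designed: differentiating along $\star_{\epsilon,t}^\beta$, substituting the Ricci equation, and integrating by parts (using that $\dot\phi$ solves $(\Delta_{\omega_\phi}+t)\dot\phi=\varphi_\epsilon-\phi$), all the $\chi_\epsilon$ contributions cancel and the derivative collapses to
$$\frac{d}{dt}\widetilde{E}_{\epsilon,\beta'}(\phi_{\epsilon,t}^\beta)=(\beta'-t)\left(-\int_X(\Delta_{\omega_\phi}\dot\phi)^2\,\omega_\phi^n+t\int_X|\partial\dot\phi|_{\omega_\phi}^2\,\omega_\phi^n\right).$$
Expanding $\dot\phi$ in eigenfunctions of $-\Delta_{\omega_\phi}$ gives $\int(\Delta_{\omega_\phi}\dot\phi)^2\geq\lambda_1\int|\partial\dot\phi|^2\geq t\int|\partial\dot\phi|^2$ by the same eigenvalue bound $\lambda_1\geq t$, so the parenthesis is $\leq0$; since $\beta'-t\geq0$ the functional is non-increasing for $t\in[0,\beta']$. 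Hence $\widetilde{E}_{\epsilon,\beta'}(\phi_{\epsilon,t}^\beta)\leq\widetilde{E}_{\epsilon,\beta'}(\psi_{\epsilon,\beta})$, and the right side is bounded above uniformly in $\epsilon$ via the explicit formula for $E$ (Proposition \ref{prop3-3}, part 2) together with the uniform $L^\infty$ bounds on $\psi_{\epsilon,\beta}$ and $\varphi_\epsilon$. Combining with the coercivity of Proposition \ref{lem3-4} yields $(\beta-\beta')J_{\omega_0}(\phi_{\epsilon,t}^\beta)-C\leq\widetilde{E}_{\epsilon,\beta'}(\phi_{\epsilon,t}^\beta)\leq C'$, so $J_{\omega_0}(\phi_{\epsilon,t}^\beta)$ is bounded uniformly in $t$ and $\epsilon$. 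Since $\text{Ric }\omega_\phi\geq0$ and the class is fixed, the Sobolev and Poincar\'e constants of $\omega_\phi$ are uniformly controlled, so Tian's inequality (Proposition \ref{prop3-3}, part 3) converts the $J_{\omega_0}$--bound into a uniform $\text{Osc}$ bound. Feeding this into Lemmas \ref{lem3-6} and \ref{lem3-7} produces uniform higher-order estimates on compact subsets of $X\backslash D$, giving closedness, so $T=[0,\beta']$. Finally, at $t=\beta'>0$, integrating the Monge--Amp\`ere equation against the fixed total volume $\int_X\omega_0^n$ pins the normalization, upgrading the oscillation bound to the claimed uniform $L^\infty$ bound on $\phi_{\epsilon,\beta'}^\beta$.

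The main obstacle I expect is exactly where $\epsilon$--uniformity is fragile. First, the monotonicity identity must be honest end to end: the reduction above and the inequality $\int(\Delta_{\omega_\phi}\dot\phi)^2\geq t\int|\partial\dot\phi|^2$ depend on the pointwise lower bound $\text{Ric }\omega_\phi\geq t\omega_\phi$ holding along the \emph{entire} path, hence on $\chi_\epsilon\geq0$. Second, passing from $J_{\omega_0}$--control to an oscillation bound through Tian's inequality requires Sobolev and Poincar\'e constants for the degenerating family $\omega_{\phi_{\epsilon,t}^\beta}$ that are independent of $\epsilon$; these follow from $\text{Ric}\geq0$ together with the fixed cohomology class, but keeping them genuinely uniform in $\epsilon$ is the delicate geometric input. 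The remaining steps---openness and the elliptic bootstrapping in Lemma \ref{lem3-7}---are routine.
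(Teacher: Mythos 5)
Your proposal follows essentially the same route as the paper's proof: continuity method in $t$ at fixed $\epsilon$, openness from the Lichnerowicz eigenvalue bound, monotonicity of $\widetilde{E}_{\epsilon,\beta'}$ along $\star_{\epsilon,t}^\beta$, a uniform upper bound at the initial point $\psi_{\epsilon,\beta}$ via the explicit formula for $E$, coercivity (Proposition \ref{lem3-4}), Tian's oscillation inequality (Proposition \ref{prop3-3}, part 3), and Lemmas \ref{lem3-6} and \ref{lem3-7}. Your derivative formula is an equivalent regrouping of the paper's: substituting $\phi-\varphi_\epsilon=-(\Delta_{\omega_\phi}+t)\dot\phi$ into the paper's expression $-(\beta'-t)\int_X(\phi-\varphi_\epsilon)^2\omega_\phi^n+t(\beta'-t)\int_X\dot\phi(\Delta_{\omega_\phi}+t)\dot\phi\,\omega_\phi^n$ yields exactly $(\beta'-t)\bigl(-\int_X(\Delta_{\omega_\phi}\dot\phi)^2\omega_\phi^n+t\int_X|\partial\dot\phi|^2_{\omega_\phi}\omega_\phi^n\bigr)$; your grouping is in fact the more transparent one, since the constant mode of $\dot\phi$ drops out of both terms and nonpositivity follows cleanly from $\lambda_1(-\Delta_{\omega_\phi})\geq t$.

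The one genuine flaw is your justification of the Sobolev and Poincar\'e bounds that feed into Tian's inequality. You assert (twice) that these ``follow from $\mathrm{Ric}\geq 0$ together with the fixed cohomology class.'' That is false: nonnegative Ricci curvature plus fixed volume gives no upper diameter bound, hence no lower bound on the first eigenvalue and no Sobolev bound. Already on $\mathbb{P}^1$, rotationally symmetric metrics of fixed area with nonnegative curvature (long capped cylinders of shrinking radius) lie in a fixed K\"ahler class and have first eigenvalue tending to $0$. The paper instead restricts to $t\in[\delta,\gamma]$ for a fixed $\delta>0$, where the path equation gives the strict bound $\mathrm{Ric}\,\omega_{\phi_{\epsilon,t}^\beta}\geq\delta\,\omega_{\phi_{\epsilon,t}^\beta}$ (using $\omega_{\varphi_\epsilon}>0$ and $\chi_\epsilon\geq 0$, which you did prove); Myers' theorem then bounds the diameter, the volume is a fixed topological constant, and these two facts bound $C_S$ and $C_P$ uniformly in $\epsilon$ and $t\in[\delta,\gamma]$. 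Since you already established $\mathrm{Ric}\geq t\,\omega_{\phi_{\epsilon,t}^\beta}$ along the path, the repair costs nothing: for closedness at any $\gamma\in(0,\beta']$ run the estimates on $[\gamma/2,\gamma)$, and for the final uniform bound at $t=\beta'$ take $\delta=\beta'$. A second, smaller imprecision: closedness of the path at fixed $\epsilon$ needs estimates on all of $X$, not only on $K\subset\subset X\setminus D$ as you state; for fixed $\epsilon>0$ the density $(|S|_h^2+\epsilon)^{\beta-1}$ is smooth on all of $X$, so the Chern--Lu and Evans--Krylov arguments apply globally with $\epsilon$-dependent constants, which is what the paper means by ``higher order estimates (which may depend on $\epsilon$).''
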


\begin{proof}
The proof follows a standard line, which is similar to the argument in \cite{CDS1} if the \emph{modified log-Mabuchi-functional} is used instead of the usual \emph{log-Mabuchi-functional}. For the readers' convenience, the calculation is included below.

 On the one hand, along the interval $t\in[0,\beta')$ on the continuity path $\star_{\epsilon,t}^\beta$,  the functional $\widetilde{E}_{\epsilon,\beta'}$ is decreasing by a direct calculation:

\begin{align*}
\frac{\mathrm{d}}{\mathrm{d}t} \widetilde{E}_{\epsilon,\beta'}(\phi_{\epsilon,t}^\beta)
=&-n\int_X \dot{\phi}_{\epsilon,t}^\beta\{\text{Ric }\omega_{\phi_{\epsilon,t}^\beta}-\omega_{\phi_{\epsilon,t}^\beta}\}\wedge \omega_{\phi_{\epsilon,t}^\beta}^{n-1}
+n(\beta-\beta')\int_X \dot{\phi}_{\epsilon,t}^\beta \{\omega_{\varphi_\epsilon}-\omega_{\phi_{\epsilon,t}^\beta}\}\wedge \omega_{\phi_{\epsilon,t}^\beta}^{n-1}\\
&+n(1-\beta)\int_X \dot{\phi}_{\epsilon,t}^\beta \{\chi_\epsilon-\omega_{\phi_{\epsilon,t}^\beta}\}\wedge \omega_{\phi_{\epsilon,t}^\beta}^{n-1}\\
=& n(\beta'-t)\int_X \dot{\phi}_{\epsilon,t}^\beta (\omega_{\phi_{\epsilon,t}^\beta}-\omega_{\varphi_\epsilon})\wedge \omega_{\phi_{\epsilon,t}^\beta}^{n-1}\\
=&(\beta'-t)\int_X (\phi_{\epsilon,t}^\beta-\varphi_\epsilon)\Delta_{\omega_{\phi_{\epsilon,t}^\beta}} \dot{\phi}_{\epsilon,t}^\beta \omega_{\phi_{\epsilon,t}^\beta}^n\\
=&(\beta'-t)\int_X (\phi_{\epsilon,t}^\beta-\varphi_\epsilon)\{-(\phi_{\epsilon,t}^\beta-\varphi_\epsilon)-t\dot{\phi}_{\epsilon,t}^\beta\}\omega_{\phi_{\epsilon,t}^\beta}^n\\
=&-(\beta'-t) \int_X (\phi_{\epsilon,t}^\beta-\varphi_\epsilon)^2 \omega_{\phi_{\epsilon,t}^\beta}^n
+t(\beta'-t)\int_X \dot{\phi}_{\epsilon,t}^\beta\{\Delta_{\omega_{\phi_{\epsilon,t}^\beta}}+t\}\dot{\phi}_{\epsilon,t}^\beta \omega_{\phi_{\epsilon,t}^\beta}^n\\
\leq & 0 
\end{align*}
where in the last inequality the second term is nonpositive by Lichnerowicz's estimate of the first eigenvalue of Laplacian operator on a manifold with a positive lower bound of the Ricci curvature.

On the other hand, since the initial potentials $\psi_{\epsilon,\beta}$ satisfes the Monge-Amp$\grave{\text{e}}$re equation \ref{eqn2-3}, by using the explicit formula for the \emph{log-Mabuchi-functional} in Proposition \ref{prop3-3}, we have

\begin{align*}
\widetilde{E}_{\epsilon,\beta'}(\psi_{\epsilon,\beta})&=\int_X \{-\beta \varphi_\epsilon-(1-\beta)\log (|S|_h^2+\epsilon)\}e^{-\beta\varphi_\epsilon+h_{\omega_0}}\frac{\omega_0^n}{(|S|_h^2+\epsilon)^{1-\beta}}-J_{\omega_0}(\psi_{\epsilon,\beta})+\int_X h_{\omega_0}\omega_0^n\\
&+(\beta-\beta')J_{\omega_{\varphi_\epsilon}}(\psi_{\epsilon,\beta})+(1-\beta)J_{\chi_\epsilon}(\psi_{\epsilon,\beta})
\end{align*}

\noindent Since $||\varphi_\epsilon||_{L^\infty(X)}$ and $||\psi_{\epsilon,\beta}||_{L^\infty(X)}$ are uniformly bounded from above by $C$, the first term 
\begin{align*}
|\int_X \{-\beta &\varphi_\epsilon-(1-\beta)\log (|S|_h^2+\epsilon)\}e^{-\beta\varphi_\epsilon
+h_{\omega_0}}\frac{\omega_0^n}{(|S|_h^2+\epsilon)^{1-\beta}}|\\
&\leq C\int_X|S|_h^{2\beta-2} |\log |S|_h^{2\beta-2}| \omega_0^n\leq C
\end{align*}
and the other terms

\begin{align*}
&J_{\omega_0}(\psi_{\epsilon,\beta}\leq I(\psi_{\epsilon,\beta})=\int_X \psi_{\epsilon,\beta}(\omega_0^n- \omega_{\psi_{\epsilon,\beta}}^n)\\
&J_{\omega_{\varphi_\epsilon}}(\psi_{\epsilon,\beta})=J_{\omega_0}(\psi_{\epsilon,\beta})+\int_X \varphi_\epsilon (\omega_{\psi_{\epsilon,\beta}}^n-\omega_0^n)\\
&J_{\chi_\epsilon}(\psi_{\epsilon,\beta})=J_{\omega_0}(\psi_{\epsilon,\beta})+\int_X \log(|S|_h^2+\epsilon)(\omega_{\psi_{\epsilon,\beta}}^n-\omega_0^n)
\end{align*}

\noindent are all bounded from above (independent of $\epsilon$). Thus the values of $\widetilde{E}_{\epsilon,\beta'}$ at the initial points $\psi_{\epsilon,\beta}$ are uniformly bounded (independent of $\epsilon$) from above.

The \emph{modified log-Mabuchi-functional} is coercive (see Lemma \ref{lem3-4}):
$$\widetilde{E}_{\epsilon,\beta'}(\phi)\geq (\beta-\beta')J_{\omega_0}(\phi)-C$$
Therefore along the interval $t\in[0,\gamma]$ where $\gamma\leq \beta'$ on which $\star_{\epsilon,t}^\beta$ can be solved,
$$J_{\omega_0}(\phi_{\epsilon,t}^\beta)\leq C$$
 for $C$ independent of $\epsilon\in (0,1]$ and $t\in [0,\gamma]$. Along the continuity path $\star_{\epsilon,t}^\beta$ for $t\in [\delta, \gamma]$, for some fixed $\delta>0$, 
 we have $\text{Ric }\omega_{\phi_{\epsilon,t}^\beta}\geq \delta\omega_{\phi_{\epsilon,t}^\beta}$, therefore the Poincar$\acute{\text{e}}$ and Sobolev constants of the family of K\"ahler metrics $\omega_{\phi_{\epsilon,t}^\beta}$ are uniformly bounded. By the third part of Propositon \ref{prop3-3}, 
 $$\text{Osc }\phi_{\epsilon,t}^\beta\leq C$$
 Since they satisfy the Monge-Amp$\grave{\text{e}}$re equation:
 
 \begin{equation}\label{eqn3-6}
 \omega_{\phi_{\epsilon,t}^\beta}^n=e^{-t\phi_{\epsilon,t}^\beta-(\beta-t)\varphi_\epsilon+h_{\omega_0}}\frac{\omega_0^n}{(|S|_h^2+\epsilon)^{1-\beta}}
 \end{equation}
 we can easily deduce that
 $$||\phi_{\epsilon,t}^\beta||_{L^\infty(X)}\leq C$$
Lemma \ref{lem3-6} and \ref{lem3-7} give us the higher order estimates for $\phi_{\epsilon,t}^\beta$ (which may depend on $\epsilon$), therefore the equation $\star_{\epsilon,t}^\beta$ can be solved up to $t=\beta'$ with uniform $L^\infty$ bound on $\phi_{\epsilon,t}^\beta$ (independent of $\epsilon$).
\end{proof}


Since $\beta'$ is any parameter smaller than $\beta$, and the continuity path $\star_{\epsilon,t}^\beta$ does not depend on $\beta'$, we immediately get the following corollary:
\begin{cor}\label{cor3-9}
For all $\epsilon\in (0,1]$, the continuity path $\star_{\epsilon,t}^\beta$ 
can be solved for $t\in [0,\beta)$.
\end{cor}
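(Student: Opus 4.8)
The plan is to exploit the fact that the continuity path $\star_{\epsilon,t}^\beta$ is defined purely in terms of $\beta$ and $\epsilon$: the auxiliary parameter $\beta'$ never appears in the equation being solved, but only in the modified functional $\widetilde{E}_{\epsilon,\beta'}$ that was used to extract the a priori $L^\infty$ bound in Proposition \ref{prop3-7}. Since that functional is forced downhill along $\star_{\epsilon,t}^\beta$ precisely on $t\in[0,\beta']$ (the derivative computed in Proposition \ref{prop3-7} carries the factor $(\beta'-t)$), larger values of $\beta'$ push the solvable range further, and the supremum $\sup_{\beta'<\beta}\beta'=\beta$ is what produces the full interval $[0,\beta)$.

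Concretely, I would fix $\epsilon\in(0,1]$ and an arbitrary target time $t_0\in[0,\beta)$, then choose any $\beta'$ with $t_0<\beta'<\beta$. Proposition \ref{prop3-7} applied to this $\beta'$ asserts that $\star_{\epsilon,t}^\beta$ is solvable for every $t\in[0,\beta']$, and in particular at $t=t_0$, with an $L^\infty$ bound depending on $\beta'$ but not on $\epsilon$. As $t_0$ ranges over $[0,\beta)$ each such time lies in some interval $[0,\beta']$ with $\beta'<\beta$, so the set of times at which $\star_{\epsilon,t}^\beta$ can be solved contains $\bigcup_{\beta'<\beta}[0,\beta']=[0,\beta)$, which is exactly the assertion of the corollary.

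The only point meriting a remark is that the solutions produced through different choices of $\beta'$ should agree, so that they truly assemble into one continuity path rather than a family of unrelated solutions. This follows from uniqueness: for each fixed $t\in(0,\beta)$ the Monge-Amp\`ere equation \eqref{eqn3-6} contains the strictly monotone factor $e^{-t\phi_{\epsilon,t}^\beta}$, so a standard maximum-principle argument shows its bounded solution is unique and hence independent of the $\beta'>t$ used to obtain it, while at $t=0$ the solution is fixed by the initial datum $\psi_{\epsilon,\beta}$. I do not expect any genuine obstacle here: all of the analytic content---the coercivity of Proposition \ref{lem3-4} together with the uniform estimates of Proposition \ref{prop3-7}---is already in place, and the corollary is merely the observation that the cutoff $\beta'$ may be taken arbitrarily close to $\beta$.
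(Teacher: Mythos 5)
Your main argument is exactly the paper's: the paper proves this corollary in one line, observing that Proposition \ref{prop3-7} holds for every $\beta'<\beta$ while the path $\star_{\epsilon,t}^\beta$ itself never involves $\beta'$, so the solvable set contains $\bigcup_{\beta'<\beta}[0,\beta']=[0,\beta)$. That part of your proposal is correct and is the intended proof.

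One caveat on your closing remark: the uniqueness justification you give is wrong. For $t>0$ the equation \eqref{eqn3-6} carries the factor $e^{-t\phi_{\epsilon,t}^\beta}$, which is \emph{decreasing} in the unknown; this is the unfavorable sign for the maximum principle. At a maximum point $p$ of $\phi_1-\phi_2$ one gets $\omega_{\phi_1}^n\leq\omega_{\phi_2}^n$, hence $e^{-t\phi_1(p)}\leq e^{-t\phi_2(p)}$, i.e. $\phi_1(p)\geq\phi_2(p)$ --- no contradiction, no conclusion. (Uniqueness with the sign $e^{+t\phi}$ is the standard maximum-principle fact; with $e^{-t\phi}$ it is the Bando--Mabuchi-type statement, which is precisely why the paper must invoke Berndtsson's theorem \cite{Bern} in Proposition \ref{prop3-10}.) Fortunately this does not matter for the corollary: consistency of the solutions produced by different choices of $\beta'$ follows from \emph{local} uniqueness via the implicit function theorem --- along the path one has $\text{Ric }\omega_{\phi_{\epsilon,t}^\beta}>t\,\omega_{\phi_{\epsilon,t}^\beta}$, so the linearization $\Delta_{\omega_{\phi_{\epsilon,t}^\beta}}+t$ is invertible and the solution branch through the common initial datum $\psi_{\epsilon,\beta}$ continues unambiguously --- and in any case the statement only asserts solvability, not canonicity of the solution.
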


\subsection{Solution up to $t=\beta$ for $\star_{\epsilon,t}^\beta$}


We first need a simple convergence property about the $I$ functional that will be used several times later.
\begin{lem}\label{lem3-9}
If $\phi_{\epsilon_j,t_j}^\beta$ converges to $\phi$ in the $C^\alpha$ sense globally on $X$,  then 
$$\lim_{j\to\infty} I(\phi_{\epsilon_j,t_j}^\beta)=I(\phi).$$
\end{lem}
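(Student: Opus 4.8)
The plan is to handle the two integrals in $I(\phi)=\int_X \phi\,\omega_0^n-\int_X \phi\,\omega_\phi^n$ separately. Write $\phi_j:=\phi^\beta_{\epsilon_j,t_j}$ and note that $C^\alpha$ convergence in particular gives uniform convergence $\phi_j\to\phi$ in $L^\infty(X)$, so that the $\phi_j$ are uniformly bounded $\omega_0$-psh functions and the limit $\phi$ is again a bounded $\omega_0$-psh function. The first term is immediate: since $\omega_0^n$ is a fixed finite measure, uniform convergence yields $\int_X\phi_j\,\omega_0^n\to\int_X\phi\,\omega_0^n$.

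For the second term I would prove $\int_X\phi_j\,\omega_{\phi_j}^n\to\int_X\phi\,\omega_\phi^n$ through the splitting
\[
\int_X\phi_j\,\omega_{\phi_j}^n-\int_X\phi\,\omega_\phi^n
=\int_X(\phi_j-\phi)\,\omega_{\phi_j}^n+\int_X\phi\,(\omega_{\phi_j}^n-\omega_\phi^n).
\]
The first piece is bounded by $\|\phi_j-\phi\|_{L^\infty(X)}\int_X\omega_{\phi_j}^n=\|\phi_j-\phi\|_{L^\infty(X)}\,V$, where $V=\int_X\omega_0^n$ is a cohomological invariant independent of $j$, and therefore tends to $0$.

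The crux is the second piece, which I would handle by the weak convergence of Monge-Amp\`ere measures $\omega_{\phi_j}^n\to\omega_\phi^n$. Since $\phi_j\to\phi$ uniformly and the potentials are uniformly bounded, uniform convergence forces convergence in capacity, and the Bedford-Taylor continuity theorem for the complex Monge-Amp\`ere operator then gives $\omega_{\phi_j}^n\to\omega_\phi^n$ weakly as measures, all of total mass $V$. Testing this weak convergence against the merely \emph{continuous} limit $\phi$ requires a short approximation: given $\eta>0$, choose a smooth $\psi$ with $\|\phi-\psi\|_{L^\infty(X)}<\eta$, and estimate
\[
\Big|\int_X\phi\,(\omega_{\phi_j}^n-\omega_\phi^n)\Big|
\le 2\|\phi-\psi\|_{L^\infty(X)}\,V+\Big|\int_X\psi\,(\omega_{\phi_j}^n-\omega_\phi^n)\Big|,
\]
where the last term tends to $0$ as $j\to\infty$ by weak convergence against the smooth $\psi$. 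Letting first $j\to\infty$ and then $\eta\to0$ closes the argument.

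The step I expect to be the main obstacle is precisely the weak convergence $\omega_{\phi_j}^n\to\omega_\phi^n$: the Monge-Amp\`ere operator is \emph{not} continuous under mere $L^1$ convergence, so it is essential that the hypothesis of $C^\alpha$ (hence $C^0$) convergence supplies uniform, and thus capacity, convergence of the uniformly bounded potentials. If one prefers a self-contained route avoiding the capacity machinery, I would instead integrate by parts twice in the telescoping identity
\[
\omega_{\phi_j}^n-\omega_\phi^n
=\sqrt{-1}\partial\bar\partial(\phi_j-\phi)\wedge\sum_{k=0}^{n-1}\omega_{\phi_j}^k\wedge\omega_\phi^{n-1-k},
\]
after replacing $\phi$ by the smooth $\psi$, bounding the mass of the positive current $\sum_{k=0}^{n-1}\omega_{\phi_j}^k\wedge\omega_\phi^{n-1-k}$ against $\omega_0$ by cohomology; this reduces the smooth-test term to the elementary $L^\infty$ estimate $C_\psi\,\|\phi_j-\phi\|_{L^\infty(X)}\to0$.
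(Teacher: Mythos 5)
Your proof is correct, but it follows a genuinely different route from the paper's. The paper exploits the fact that $\phi_{\epsilon_j,t_j}^\beta$ solves the Monge-Amp\`ere equation along the continuity path $\star_{\epsilon,t}^\beta$, so that the measure $\omega_{\phi_{\epsilon_j,t_j}^\beta}^n$ has the explicit density
$e^{-t_j\phi_{\epsilon_j,t_j}^\beta-(\beta-t_j)\varphi_{\epsilon_j}+h_{\omega_0}}(|S|_h^2+\epsilon_j)^{\beta-1}$
with respect to $\omega_0^n$; since $\|\phi_{\epsilon_j,t_j}^\beta\|_{L^\infty}$ and $\|\varphi_{\epsilon_j}\|_{L^\infty}$ are uniformly bounded and $t_j\in[\beta',\beta]$, the integrand $\phi_{\epsilon_j,t_j}^\beta\,\omega_{\phi_{\epsilon_j,t_j}^\beta}^n/\omega_0^n$ is dominated by $C|S|_h^{2\beta-2}\in L^1(X,\omega_0^n)$, and the Dominated Convergence Theorem finishes the proof in two lines. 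You never use the equation at all: you instead invoke the Bedford--Taylor continuity of the complex Monge-Amp\`ere operator under uniform convergence of uniformly bounded $\omega_0$-psh potentials, the cohomological mass bound $\int_X\omega_{\phi_j}^n=V$, and a smoothing of the test function (or, in your self-contained variant, the telescoping identity plus integration by parts for currents with bounded potentials, which is again Bedford--Taylor-type machinery and should be cited as such). What your argument buys is generality: it shows that $I$ is continuous along \emph{any} uniformly convergent sequence of uniformly bounded potentials, independent of any equation they satisfy, and in particular it would cover verbatim the analogous convergence claimed for the path $\star_{\epsilon,t}$ in Proposition \ref{prop4-1}, where the equation is different. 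What the paper's argument buys is brevity and elementarity: no weak convergence of Monge-Amp\`ere measures, no capacity, just measure-theoretic domination --- but it is tied to the specific structure of the continuity path and to the integrability of $|S|_h^{2\beta-2}$, i.e.\ to $\beta>0$. Both are complete proofs of the lemma as stated.
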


\begin{proof}
\noindent We have the formula  
  $$I(\phi_{\epsilon_j,t_j}^\beta)=\int_X \phi_{\epsilon_j,t_j}^\beta\omega_0^n
  -\int_X\phi_{\epsilon_j,t_j}^\beta e^{-t_j\phi_{\epsilon_j,t_j}^\beta-(\beta-t_j)\varphi_{\epsilon_j}
  +h_{\omega_0}}\frac{\omega_0^n}{(|S|_h^2+\epsilon_j)^{1-\beta}}
$$
and the second integrand is bounded by some $L^1$ function on $X$. The convergence claimed in the lemma follows from the Dominated Convergence Theorem.




 \end{proof}
 
 Since Corollary \ref{cor3-9} already assures that $\star_{\epsilon,t}^\beta$ can be solved on the open interval $[0,\beta)$ with uniform $L^\infty$ bound on $\phi_{\epsilon,\beta'}^\beta$ for any fixed $\beta'<\beta$, what remains to be shown is a bound on $J_{\omega_0}(\phi_{\epsilon,t}^\beta)$, uniform in $\epsilon$ and $t\in (\beta',\beta)$. Since Aubin's $I$ functional is equivalent to $J_{\omega_0}$ by the inequality \ref{inequ3-6}, we have
\begin{equation}\label{eqn3-9}
I(\phi_{\epsilon,\beta'}^\beta)\leq C
\end{equation}
for $C$ independent of $\epsilon$.
By a contradiction argument based on Berndtsson's Generalized Bando-Mabuchi Theorem \cite{Bern}, the required uniform bound is achievable by the following proposition:

\begin{prop}\label{prop3-10}

For any fixed $\beta'<\beta$, there exists $\epsilon_0=\epsilon_0(\beta')>0,$ such that for all $\epsilon\in (0,\epsilon_0]$  we have 
$$\sup_{t\in [\beta',\beta)} \{I(\phi_{\epsilon,t}^\beta)-I(\phi_{\epsilon,\beta'}^\beta)\} \leq 1.$$
\end{prop}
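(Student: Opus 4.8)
The plan is to argue by contradiction, to extract a limiting weak continuity path as $\epsilon\to 0$, and to use the uniqueness furnished by Berndtsson's theorem to rule out a jump of the $I$-functional. First I would negate the statement: if it fails there is a sequence $\epsilon_j\downarrow 0$ with $\sup_{t\in[\beta',\beta)}\{I(\phi_{\epsilon_j,t}^\beta)-I(\phi_{\epsilon_j,\beta'}^\beta)\}>1$. For each fixed $\epsilon_j$ the path $\star_{\epsilon_j,t}^\beta$ is smooth, so $t\mapsto I(\phi_{\epsilon_j,t}^\beta)$ is continuous on $[\beta',\beta)$ and vanishes after subtracting its value at $\beta'$; by the intermediate value theorem I let $t_j\in(\beta',\beta)$ be the \emph{first} time the difference equals $1$. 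Then $I(\phi_{\epsilon_j,t}^\beta)\le I(\phi_{\epsilon_j,\beta'}^\beta)+1\le C+1$ for every $t\in[\beta',t_j]$, where $C$ is the bound from \eqref{eqn3-9}.

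On the window $[\beta',t_j]$ I can then run the whole estimate chain uniformly in $j$. The bound on $I$ gives, via the comparison \eqref{inequ3-6}, a uniform bound on $J_{\omega_0}(\phi_{\epsilon_j,t}^\beta)$; since $t\ge\beta'>0$ forces $\mathrm{Ric}\,\omega_{\phi_{\epsilon_j,t}^\beta}\ge\beta'\,\omega_{\phi_{\epsilon_j,t}^\beta}$ along the path, the Poincar\'e and Sobolev constants are uniformly controlled and Proposition~\ref{prop3-3}(3) bounds $\mathrm{Osc}\,\phi_{\epsilon_j,t}^\beta$; the Monge--Amp\`ere equation \eqref{eqn3-6} together with the uniform $L^1$-bound on the weight $(|S|_h^2+\epsilon)^{\beta-1}$ then pins the normalizing constant and yields $\|\phi_{\epsilon_j,t}^\beta\|_{L^\infty}\le C$. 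Lemmas~\ref{lem3-6} and \ref{lem3-7} upgrade this to uniform interior $C^k$ bounds on compact subsets of $X\setminus D$. Passing to a subsequence, $t_j\to t_\infty\in[\beta',\beta]$, and $\phi_{\epsilon_j,t_j}^\beta\to\phi_\infty$, $\phi_{\epsilon_j,\beta'}^\beta\to\Phi_{\beta'}$ in $C^\alpha(X)\cap C^\infty_{\mathrm{loc}}(X\setminus D)$; since $\varphi_{\epsilon_j}\to\varphi_\beta$, the limits solve the singular ($\epsilon=0$) Monge--Amp\`ere equations with parameters $t_\infty$ and $\beta'$ respectively. By Lemma~\ref{lem3-9} the functionals converge, so the limiting identity $I(\phi_\infty)-I(\Phi_{\beta'})=1$ must hold.

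The final step, which I expect to be the main obstacle, is to contradict this identity. For $t<\beta$ the limit equation carries the strictly positive twist $(\beta-t)\,\omega_{\varphi_\beta}$, so its bounded solution is unique; at $t=\beta$ the twist disappears and the equation is exactly the weak conical K\"ahler--Einstein equation, whose solutions are unique up to $\mathrm{Aut}(X,D)$ by Berndtsson's Generalized Bando--Mabuchi theorem \cite{Bern} (and unique up to an additive constant when no holomorphic vector field is tangential to $D$). A useful accompanying fact is that the log-Mabuchi functional is monotone along the path: using $\mathrm{Ric}\ge t\omega$ and the Bochner--Lichnerowicz inequality $\int_X(\Delta\dot\phi)^2\,\omega^n\ge t\int_X|\nabla\dot\phi|^2\,\omega^n$ one computes $\tfrac{d}{dt}E_{(1-\beta)D}(\phi_{\epsilon,t}^\beta)=(\beta-t)\big(t\!\int_X|\nabla\dot\phi|^2-\int_X(\Delta\dot\phi)^2\big)\le 0$, and $E_{(1-\beta)D}$ is bounded below by the existence of $\omega_{\varphi_\beta}$. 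I would use this monotonicity to force $t_\infty=\beta$, so that $\phi_\infty$ is a genuine weak conical K\"ahler--Einstein metric, and then invoke Berndtsson's uniqueness to identify $\phi_\infty$ with $\varphi_\beta$.

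The delicate point is that this uniqueness only holds modulo $\mathrm{Aut}(X,D)$, while $I$ is \emph{not} automorphism invariant; the heart of the matter is therefore to show that the continuity path does not escape to infinity along the automorphism orbit, so that $I(\phi_\infty)$ agrees with the value $I(\varphi_\beta)$ dictated by the endpoint and the supposed jump of size $1$ becomes impossible. This is exactly where Berndtsson's convexity (and, in the simple normal crossing setting of Theorem~\ref{thm1-2}, the absence of tangential holomorphic vector fields) is indispensable; a secondary subtlety to control carefully is the \emph{order of the two limits} $\epsilon_j\to 0$ and $t_j\to\beta$, since the global $C^\alpha$-convergence needed to apply Lemma~\ref{lem3-9} must survive the degeneration of the weight as $t\to\beta$.
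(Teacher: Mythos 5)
Your contradiction setup, the choice of the first hitting times $t_j$, the uniform estimate chain on $[\beta',t_j]$, the extraction of limits $\phi_\infty$ and $\Phi_{\beta'}$, and the use of Lemma \ref{lem3-9} to arrive at the identity $I(\phi_\infty)-I(\Phi_{\beta'})=1$ all coincide with the paper's proof. The genuine gap is in your endgame. The paper's key observation---which you never state---is that because the twist term of $\star_{\epsilon,t}^\beta$ is built from $\omega_{\varphi_\epsilon}$, and $\varphi_{\epsilon_j}\to\varphi_\beta$, the limit equation at \emph{any} accumulation point $t_\infty\in[\beta',\beta]$,
\begin{equation*}
\text{Ric }\omega_{\phi_\infty}=t_\infty\omega_{\phi_\infty}+(\beta-t_\infty)\omega_{\varphi_\beta}+(1-\beta)\chi,
\end{equation*}
admits $\varphi_\beta$ itself as a solution, since $\text{Ric }\omega_{\varphi_\beta}=\beta\omega_{\varphi_\beta}+(1-\beta)\chi=t_\infty\omega_{\varphi_\beta}+(\beta-t_\infty)\omega_{\varphi_\beta}+(1-\beta)\chi$. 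Setting $\Omega_1=t_\infty\omega_{\phi_\infty}$, $\Omega_2=t_\infty\omega_{\varphi_\beta}$ and $\theta=(\beta-t_\infty)\omega_{\varphi_\beta}+(1-\beta)\chi$, Berndtsson's theorem \cite{Bern} produces $f\in \mathrm{Aut}(X)$, generated by a holomorphic vector field, with $f^*\Omega_1=\Omega_2$ and $f^*\theta=\theta$; the Siu decomposition of $\theta$ (the $\omega_{\varphi_\beta}$-part has zero Lelong numbers, the $\chi$-part does not) forces $f^*\chi=\chi$, so the generating field is tangent to $D$; and by Theorem 1.5 of \cite{Berm} or Theorem 2.1 of \cite{SW} there are no nonzero holomorphic vector fields tangent to a smooth $D\in|-K_X|$, so $f=\mathrm{id}$ and $\phi_\infty=\varphi_\beta$. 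The same argument at parameter $\beta'$ gives $\Phi_{\beta'}=\varphi_\beta$, whence $1=0$. Note that no case distinction on $t_\infty$ is needed, and the automorphism issue you flag as ``the heart of the matter'' is settled by a theorem in this setting, not left as an assumption.

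Your alternative route does not close. First, the plan to force $t_\infty=\beta$ via monotonicity of the log-Mabuchi functional cannot work: the $t_j$ are hitting times of the $I$-difference, and monotonicity of a functional in $t$ along each fixed path $\star_{\epsilon_j,t}^\beta$ gives no information about where those times accumulate; they may well stay near $\beta'$. Second, for $t_\infty<\beta$ you assert that strict positivity of the twist yields uniqueness of bounded solutions, but this is unjustified as stated: a holomorphic vector field whose flow preserves a K\"ahler form (a Killing field) can exist on a Fano manifold, so positivity of $\theta$ alone does not kill the automorphism in Berndtsson's conclusion---what kills it is the $[D]$-component of $\theta$ via the Lelong-number argument above, and that argument works uniformly for all $t_\infty\in[\beta',\beta]$, including $t_\infty=\beta$. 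Third, and most importantly, even granting uniqueness at each parameter, $\phi_\infty$ and $\Phi_{\beta'}$ solve \emph{different} equations (parameters $t_\infty$ and $\beta'$ respectively), so uniqueness alone does not identify them with each other, nor give $I(\phi_\infty)=I(\Phi_{\beta'})$; without the observation that $\varphi_\beta$ solves the twisted equation at every parameter, no contradiction follows from the identity $I(\phi_\infty)-I(\Phi_{\beta'})=1$.
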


\begin{proof}
Argue by contradiction. Suppose the claimed estimate does not hold. Then there exists a sequence $\epsilon_j\searrow 0$,  with
 $$\sup_{t\in [\beta', \beta)} \{I(\phi_{\epsilon_j,t}^\beta)-I(\phi_{\epsilon_j,\beta'}^\beta) \}> 1$$ 
 Let $t_j$ be the first number $t \in (\beta',\beta)$ such that 
 $$I(\phi_{\epsilon_j,t_j}^\beta)-I(\phi_{\epsilon_j,\beta'}^\beta)=1$$

\noindent By the inequality \ref{eqn3-9},
$$I(\phi_{\epsilon_j,t_j}^\beta)\leq C+1$$
Then we get the uniform bound on $\phi_{\epsilon_j,t_j}^\beta$ and argue similarly 
 as in the proof of Proposition \ref{prop3-7}: we have all the higher order bounds
 $$||\phi_{\epsilon_j,t_j}^\beta||_{C^k(K)}\leq C_{k,K}$$
 on any $K\subset\subset X\backslash D$ and $k\in \mathbb{N}$. By taking a subsequence, we can assume that $\phi_{\epsilon_j,t_j}^\beta$ 
 converges to the $\phi$ in $C^\alpha$ sense globally on $X$ and in the $C^\infty$ sense away from $D$.


If $t_j\to t_\infty$, $\phi$ will be a solution to the current equation:

$$ \text{Ric }\omega_{\phi}=t_\infty \omega_{\phi}+(\beta-t_\infty)\omega_{\varphi_\beta}+(1-\beta)\chi$$

On the other hand, the \emph{weak conical K\"ahler-Einstein metric} $\omega_{\varphi_\beta}$ is also a solution:

$$ \text{Ric }\omega_{\varphi_\beta}=t_\infty \omega_{\varphi_\beta}+(\beta-t_\infty)\omega_{\varphi_\beta}+(1-\beta)\chi$$

Let $\Omega_1=t_\infty\omega_{\phi}$, $\Omega_2=t_\infty\omega_{\varphi_\beta}$ and $\theta=(\beta-t_\infty)\omega_{\varphi_\beta}+(1-\beta)\chi$.  Then $\Omega_1, \Omega_2$ give two bounded solutions to the equation:

$$\text{Ric }\Omega=\Omega+\theta$$ in the fixed cohomology class $2\pi t_\infty c_1(X)$. Berndtsson's Uniqueness Theorem \cite{Bern} implies that there exists $f\in Aut(X)$ which is generated by some holomorphic vector field $\mathbf{V}$ on $X$ such that 
$$f^*\Omega_1=\Omega_2$$
and
$$f^*\theta=\theta$$

Since $\theta=(\beta-t_\infty)\omega_{\varphi_\beta}+(1-\beta)\chi$ is the Siu's decomposition of a current, and $(\beta-t_\infty)\omega_{\varphi_\beta}$ has zero \emph{Lelong's number},
$$f^*\chi=\chi$$ 
Therefore $\mathbf{V}$ is tangential to $D$. However, there are no holomorphic vector fields tangential to $D$ according to Theorem 1.5 of Berman \cite{Berm} (by proving the \emph{properness} of the \emph{log-Mabuchi-functional}) or Theorem 2.1 of Song-Wang \cite{SW} (by pure algebraic geometry). Hence $f=id$, which implies that $\Omega_1=\Omega_2$, and in particular $\phi=\varphi_\beta$. And similarly $\phi_{\epsilon_j,\beta'}^\beta$ converges to $\varphi_\beta$ in the $C^\alpha$ sense globally on $X$.

However, by Lemma \ref{lem3-9} 
$$0=I(\varphi_\beta)-I(\varphi_\beta)=\lim_{j\to \infty}I(\phi_{\epsilon_j,t_j}^\beta)-I(\phi_{\epsilon_j,\beta'}^\beta)=1$$
which is a contradiction.
\end{proof}




With the uniform upper bound on the $I$ functional of Proposition \ref{prop3-10}, the same strategy as in Proposition \ref{prop3-7} would show the following:

\begin{prop}\label{prop3-11}
 There exists $\epsilon_0>0$ such that for all $\epsilon\in (0,\epsilon_0]$, the continuity path $\star_{\epsilon,t}^\beta$ is solvable up to $t=\beta$ with a uniform bound  
 $||\phi_{\epsilon,\beta}^\beta||_{L^\infty(X)}$
 for $\epsilon\in (0,\epsilon_0]$.
\end{prop}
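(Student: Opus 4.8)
The plan is to mirror the proof of Proposition~\ref{prop3-7}, now feeding in the uniform control of Aubin's $I$ functional furnished by Proposition~\ref{prop3-10} in place of the coercivity of $\widetilde{E}_{\epsilon,\beta'}$. Fix $\beta'<\beta$ and let $\epsilon_0=\epsilon_0(\beta')$ be as in Proposition~\ref{prop3-10}. Corollary~\ref{cor3-9} already supplies solutions $\phi_{\epsilon,t}^\beta$ for every $t\in[0,\beta)$, so the only new content is to push the a priori estimates uniformly up to the endpoint $t=\beta$ and then close up the continuity path there. First I would combine the bound $I(\phi_{\epsilon,\beta'}^\beta)\leq C$ of \ref{eqn3-9} with the increment estimate of Proposition~\ref{prop3-10} to obtain $I(\phi_{\epsilon,t}^\beta)\leq C$ for all $t\in[\beta',\beta)$ and all $\epsilon\in(0,\epsilon_0]$, with $C$ independent of $\epsilon$ and $t$; by the comparison \ref{inequ3-6} this gives $J_{\omega_0}(\phi_{\epsilon,t}^\beta)\leq C$ on the same range.

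Next I would convert this into a uniform oscillation bound exactly as in Proposition~\ref{prop3-7}. For $t\in[\beta',\beta]$ the positivity $\chi_\epsilon\geq 0$ together with $\beta-t\geq 0$ and $t\geq\beta'$ gives $\text{Ric}\,\omega_{\phi_{\epsilon,t}^\beta}\geq\beta'\omega_{\phi_{\epsilon,t}^\beta}$, so by Myers' theorem (bounding the diameter) and the fact that the volume $\int_X\omega_{\phi_{\epsilon,t}^\beta}^n=\int_X\omega_0^n$ is fixed, the Poincar\'e and Sobolev constants of $\omega_{\phi_{\epsilon,t}^\beta}$ are uniformly bounded. Feeding these, together with the $J_{\omega_0}$ bound, into the third part of Proposition~\ref{prop3-3} produces $\text{Osc}\,\phi_{\epsilon,t}^\beta\leq C$, uniform in $t\in[\beta',\beta)$ and $\epsilon\in(0,\epsilon_0]$. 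This upgrades to an honest $L^\infty$ bound by the standard normalization trick: integrating the Monge-Amp\`ere equation \ref{eqn3-6} over $X$ equates the fixed mass $\int_X\omega_0^n$ with $\int_X e^{-t\phi_{\epsilon,t}^\beta}G_{\epsilon,t}\,\omega_0^n$, where $G_{\epsilon,t}=e^{-(\beta-t)\varphi_\epsilon+h_{\omega_0}}(|S|_h^2+\epsilon)^{-(1-\beta)}$ has $\int_X G_{\epsilon,t}\,\omega_0^n$ bounded above and below by positive constants independent of $\epsilon$ and $t$ (using the uniform $L^\infty$ bound on $\varphi_\epsilon$ and the $\epsilon$-uniform integrability of $|S|_h^{2\beta-2}$). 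Combined with $\text{Osc}\,\phi_{\epsilon,t}^\beta\leq C$ and $t\geq\beta'>0$, this pins $\inf_X\phi_{\epsilon,t}^\beta$ from both sides, yielding $\|\phi_{\epsilon,t}^\beta\|_{L^\infty(X)}\leq C$ uniformly in $t\in[\beta',\beta)$ and $\epsilon\in(0,\epsilon_0]$.

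It then remains to produce an actual solution at $t=\beta$. Here I would fix $\epsilon\in(0,\epsilon_0]$ and argue by closedness: the uniform $L^\infty$ (hence oscillation) bound feeds Lemma~\ref{lem3-6} to give a global lower bound $\omega_{\phi_{\epsilon,t}^\beta}\geq C^{-1}\omega_0$, and since for fixed $\epsilon>0$ the coefficient $(|S|_h^2+\epsilon)^{-(1-\beta)}$ is smooth and positive on all of $X$, the global Evans-Krylov and Schauder estimates give $C^k(X)$ bounds uniform in $t\in[\beta',\beta)$ (now depending on $\epsilon$). Choosing $t_j\nearrow\beta$ and extracting a convergent subsequence produces a smooth solution $\phi_{\epsilon,\beta}^\beta$ of $\star_{\epsilon,\beta}^\beta$, and the $L^\infty$ bound passes to the limit with a constant independent of $\epsilon\in(0,\epsilon_0]$. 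The genuinely hard, non-local analytic input, namely ruling out the loss of compactness / automorphism degeneration as $t\to\beta$, has already been isolated in Proposition~\ref{prop3-10} via Berndtsson's uniqueness theorem; relative to that, the main point one must be careful about here is that it is the $L^\infty$ bound, not merely the oscillation bound, which is uniform up to the open endpoint, and this is precisely where $t$ being bounded away from $0$ by $\beta'$ and the positivity $\chi_\epsilon\geq 0$ enter.
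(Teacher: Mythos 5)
Your proposal is correct and is exactly the argument the paper intends: the paper's "proof" of Proposition \ref{prop3-11} is the single remark that, with the uniform bound on the $I$ functional from Proposition \ref{prop3-10}, "the same strategy as in Proposition \ref{prop3-7}" applies, and your write-up fills in precisely those steps (uniform $I$ bound $\Rightarrow$ $J_{\omega_0}$ bound via \ref{inequ3-6}, uniform Poincar\'e/Sobolev constants from $\text{Ric}\geq\beta'\omega$, oscillation bound via Proposition \ref{prop3-3}, $L^\infty$ bound from the normalization of equation \ref{eqn3-6}, then $\epsilon$-dependent higher-order estimates and a limit $t_j\nearrow\beta$ to close at the endpoint). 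Your explicit attention to why the oscillation bound upgrades to an $L^\infty$ bound, and to the fact that for fixed $\epsilon>0$ the equation is smooth on all of $X$ so global estimates and closedness at $t=\beta$ go through, are details the paper leaves implicit but are consistent with it.
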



\section{Deforming The Cone Angle from $\beta$}\label{sect4}

The endpoint $\omega_{\phi_{\epsilon,\beta}^\beta}$ of the continuity path $\star_{\epsilon,t}^\beta$ will be the starting point of the new two parameter family continuity path $\star_{\epsilon,t}$ defined in section \ref{sect2}, with $\epsilon\in (0,\epsilon_0]$:

\begin{equation*} \star_{\epsilon,t}:\left\{ \begin{array}{ll}
    \text{Ric }\omega_{u_{\epsilon,t}}&=t\omega_{u_{\epsilon,t}}+(1-t)\chi_\epsilon\\
    u_{\epsilon,\beta}&=\phi_{\epsilon,\beta}^\beta
    \end{array} \right. 
    \end{equation*}
    
    
Since $\chi_\epsilon$ is a strictly positive $(1,1)$ form, the linearized operator at $t=\beta$, which equals to $\Delta_{\omega_{u_{\epsilon,\beta}}}+\beta$, is invertible for some standard suitable Banach spaces. The standard \emph{implicit function theorem} enables us to perturb $t$ a little bit in both directions on $\star_{\epsilon,t}$ for $\epsilon\in (0,\epsilon_0]$. 

\begin{prop}\label{prop4-1} In both directions, there is uniform upper bound on the $I$ functional under small perturbation:
   \begin{itemize}
    \item There exists $\delta_1>0$ such that for all $\beta'\in (\beta-\delta_1,\beta)$, there exists $\epsilon_1\in (0,\epsilon_0]$ such that  for all
    $\epsilon\in (0,\epsilon_1)$
    $$\sup_{t\in (\beta',\beta]} \{I(u_{\epsilon,t})-I(u_{\epsilon,\beta})\}\leq 1$$
    \item There exists $\delta_2>0$ such that for all $\beta''\in (\beta,\beta+\delta_2)$, there exists $\epsilon_2\in (0,\epsilon_0]$ such that  for all
    $\epsilon\in (0,\epsilon_2)$
    $$\sup_{t\in [\beta,\beta'')} \{I(u_{\epsilon,t})-I(u_{\epsilon,\beta})\}\leq 1$$
    \end{itemize}
Consequently, take $\delta=\min(\delta_1,\delta_2)$ and $\underline\epsilon=\min(\epsilon_1,\epsilon_2)$, then
$u_{\epsilon,t}$ will have uniform $L^\infty$ bound for $\epsilon\in (0, \underline\epsilon]$ and $t\in (\beta-\delta,\beta+\delta)$.
\end{prop}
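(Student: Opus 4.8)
The plan is to prove both bullets by the contradiction scheme of Proposition \ref{prop3-10}, now allowing the angle parameter $t$ and the regularization $\epsilon$ to degenerate to $\beta$ and $0$ \emph{simultaneously}. I treat the first bullet (perturbation with $t<\beta$) in detail; the second is identical with the one-sided interval reversed. The path $\star_{\epsilon,t}$ is locally solvable near $t=\beta$ by the implicit function theorem noted above. Suppose no $\delta_1$ exists. Then for each $k$ I produce $\beta_k'\nearrow\beta$, a parameter $\epsilon_k\searrow 0$, and---taking the first crossing time as $t$ decreases from $\beta$---a value $t_k\in(\beta_k',\beta)$ with $t_k\to\beta$ and $I(u_{\epsilon_k,t_k})-I(u_{\epsilon_k,\beta})=1$.

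First I record that the initial data are uniformly controlled: Proposition \ref{prop3-11} bounds $u_{\epsilon,\beta}=\phi_{\epsilon,\beta}^\beta$ in $L^\infty$ uniformly, hence $I(u_{\epsilon,\beta})$, so $I(u_{\epsilon_k,t_k})\le C+1$. Since $\chi_\epsilon>0$ and $t_k$ stays near $\beta>0$, the equation $\text{Ric }\omega_{u_{\epsilon_k,t_k}}=t_k\omega_{u_{\epsilon_k,t_k}}+(1-t_k)\chi_{\epsilon_k}\ge t_k\omega_{u_{\epsilon_k,t_k}}$ gives a uniform positive lower Ricci bound, hence uniform Poincar\'e and Sobolev constants; feeding the uniform $I$ (equivalently $J_{\omega_0}$) bound into part $3$ of Proposition \ref{prop3-3} gives a uniform oscillation bound, and the Monge-Amp\`ere equation
$$\omega_{u_{\epsilon,t}}^n=e^{-t u_{\epsilon,t}+h_{\omega_0}}\frac{\omega_0^n}{(|S|_h^2+\epsilon)^{1-t}}$$
then promotes this to a uniform $L^\infty$ bound, exactly as in Proposition \ref{prop3-7}. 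Lemmas \ref{lem3-6} and \ref{lem3-7} supply higher order bounds on every $K\subset\subset X\backslash D$, so along a subsequence $u_{\epsilon_k,t_k}\to u$ and $u_{\epsilon_k,\beta}=\phi_{\epsilon_k,\beta}^\beta\to v$ in $C^\alpha$ globally and in $C^\infty$ away from $D$.

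Because $t_k\to\beta$, $\epsilon_k\to 0$ and $\chi_{\epsilon_k}\to\chi=2\pi[D]$, both $u$ and $v$ solve the current equation $\text{Ric }\omega=\beta\omega+(1-\beta)\chi$, i.e. both are weak conical K\"ahler-Einstein metrics of angle $2\pi\beta$. Applying Berndtsson's uniqueness theorem \cite{Bern} together with the absence of holomorphic vector fields tangential to $D$ (Berman \cite{Berm} or Song-Wang \cite{SW} in the setting of Theorem \ref{thm1-1}; the standing hypothesis of Theorem \ref{thm1-2}) exactly as in Proposition \ref{prop3-10}, I conclude $u=v=\varphi_\beta$. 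The continuity of $I$ under $C^\alpha$ convergence with dominated volume forms---the argument of Lemma \ref{lem3-9}, valid since $1-t<1$ for $t$ near $\beta$---then forces $I(u_{\epsilon_k,t_k})\to I(\varphi_\beta)$ and $I(u_{\epsilon_k,\beta})\to I(\varphi_\beta)$, so their difference tends to $0$, contradicting the value $1$. This proves the first bullet, and the second is verbatim. With $\delta=\min(\delta_1,\delta_2)$ and $\underline\epsilon=\min(\epsilon_1,\epsilon_2)$ for fixed $\beta',\beta''$ inside $(\beta-\delta,\beta+\delta)$, the two one-sided estimates combine to $I(u_{\epsilon,t})\le I(u_{\epsilon,\beta})+1\le C$ for $\epsilon\in(0,\underline\epsilon]$ and $t$ in the resulting neighborhood of $\beta$; running the oscillation-plus-Monge-Amp\`ere chain once more converts this uniform $I$ bound into the asserted uniform $L^\infty$ bound.

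I expect the main obstacle to be the simultaneous degeneration: in Proposition \ref{prop3-10} only $\epsilon\to 0$ while $t$ approached a \emph{fixed} $t_\infty$, whereas here every constant (the Ricci lower bound, the Poincar\'e and Sobolev constants, and the $L^1$ domination used for $I$) must remain uniform as $t_k\to\beta$, $\beta_k'\to\beta$ and $\epsilon_k\to 0$ together. I must also verify separately that the first-path endpoints $\phi_{\epsilon_k,\beta}^\beta$ converge to $\varphi_\beta$, so that both $I$-values can be pinned to $I(\varphi_\beta)$; this is where the uniqueness theorem is invoked a second time, for the limit $v$.
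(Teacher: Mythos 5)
Your proposal is correct and follows essentially the same argument as the paper's proof: a contradiction scheme with sequences $\epsilon_j\searrow 0$, angles tending to $\beta$, and a first-crossing time where the $I$-difference equals $1$, then uniform $L^\infty$ and higher-order bounds, passage to limits $u,v$ solving $\text{Ric }\omega=\beta\omega+(1-\beta)\chi$, identification $u=v=\varphi_\beta$ via Berndtsson's uniqueness theorem \cite{Bern} and the absence of tangential holomorphic vector fields, and finally the dominated-convergence argument of Lemma \ref{lem3-9} to reach $0=1$. The only cosmetic difference is that you detail the direction $t<\beta$ while the paper writes out $t>\beta$ (each noting the other is symmetric), and the ``main obstacle'' you flag --- the simultaneous degeneration $t_k\to\beta$, $\epsilon_k\to 0$ --- is handled exactly as in your text, since all constants depend only on the uniform positive Ricci lower bound and the uniform $L^1$ domination for exponents $1-t<1-\beta'<1$.
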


\begin{proof}
Since the two directions are similar, we only prove the right direction. The proof uses the same idea as in Proposition \ref{prop3-10}.
We argue by contradiction based on normalization of the $I$ functional. Assume the conclusion is not true, then we can find a sequence $\epsilon_j\searrow 0$ and $\beta_j''=\beta+\delta_j$ with $\delta_j\searrow 0$, such that
$$\sup_{t\in [\beta,\beta_j'']} \{I(u_{\epsilon_j,t})-I(u_{\epsilon_j,\beta})\}>1$$
Let $t_j$ be the first number bigger than $\beta$ such that
$$I(u_{\epsilon_j,t_j})-I(u_{\epsilon_j,\beta})=1$$
 Since $I(u_{\epsilon_j,\beta})$ is uniformly bounded by Proposition \ref{prop3-11},  
 $$I(u_{\epsilon_j,\beta})\leq C,$$
 which implies that 
 $$I(u_{\epsilon_j,t_j})\leq C+1$$
 The standard argument gives the $L^\infty$ bound since the Ricci curvature of this family of metrics is uniformly bounded below by some positive constant and the volume is a fixed topological constant. 
 
The continuity path $\star_{\epsilon,t}$ corresponds to the Monge-Amp$\grave{\text{e}}$re equation 
 
 \begin{equation}\label{eqn4-2}
 \omega_{u_{\epsilon,t}}^n=e^{-tu_{\epsilon,t}+h_{\omega_0}}\frac{\omega_0^n}{(|S|_h^2+\epsilon)^{1-t}}
 \end{equation}
 
\noindent Even though this equation is different from equation \ref{eqn3-6}, the uniform $L^\infty$ bound of $u_{\epsilon_j,t_j}$ will imply the R.H.S. is uniform in $L^p$ for some $p>1$, which yields the global $C^\alpha$ bound for some $\alpha\in (0,1)$ by \cite{Ko}.
 
Now by using the analogous result of Lemma \ref{lem3-6} and \ref{lem3-7} for equation \ref{eqn4-2}, we get all the higher order bounds of $u_{\epsilon_j,t_j}$ away from $D$. Then by taking the limit (subsequentially) $u_{\epsilon_j,t_j}$ converges to some $u$ in the $C^\alpha$ sense globally on $X$ and $C^\infty$ sense away from $D$, and $u_{\epsilon_j, \beta}$ converges to some $v$ in the same sense. We know that $u$ and $v$ are both solutions to the equation:

$$ \text{Ric }\omega=\beta \omega+(1-\beta)\chi$$

By the same argument as in Proposition \ref{prop3-10}, 
$$u=v=\varphi_\beta.$$ Similar to Lemma \ref{lem3-9}, we have the convergence of the functional $I$ by the Dominated Convergence Theorem:
$$0=I(u)-I(v)=\lim_{j\to \infty}I(u_{\epsilon_j, t_j})-I(u_{\epsilon_j, \beta})=1$$
Contradiction.
\end{proof}

As a consequence, we can finish the proof of Theorem \ref{thm1-1}.

\begin{proof}[\bfseries{Proof of Theorem \ref{thm1-1}}]
Proposition \ref{prop4-1} gives a family of solutions with $\epsilon \in (0,\underline\epsilon]$ and $\beta'\in (\beta-\delta,\beta+\delta)$:

\begin{equation}
\omega_{u_{\epsilon, \beta'}}^n=e^{-\beta' u_{\epsilon,\beta'}+h_{\omega_0}}\frac{\omega_0^n}{(|S|_h^2+\epsilon)^{1-\beta'}}
\end{equation}
with $||u_{\epsilon,\beta'}||_{L^\infty}\leq C$ independent of $\epsilon$ and $\beta'$. Then by taking a limit, $\varphi_{\beta'}=\lim_{\epsilon\to 0}u_{\epsilon,\beta'}$ is bounded on $X$ and smooth away from $D$. Moreover it satisfies:
$$\omega_{\varphi_{\beta'}}^n=e^{-\beta'\varphi_{\beta'}+h_{\omega_0}}\frac{\omega_0^n}{|S|_h^{2-2\beta}}.$$
Therefore $\omega_{\varphi_{\beta'}}$ is a \emph{weak conical K\"ahler-Einstein metric} of angle $2\pi\beta'$ along $D$.
\end{proof}

\begin{rmk}
As is shown in \cite{Yao}, around each point $p$ on $D$ a uniform $C^{1,1}$ comparison for the smooth approximation $\omega_{u_{\epsilon,\beta'}}$ holds, i.e.
$$C^{-1}\omega_{(\beta',\epsilon)}\leq \omega_{u_{\epsilon,\beta'}}\leq C\omega_{(\beta',\epsilon)}$$
where $\omega_{(\beta',\epsilon)}=\sqrt{-1}\{\beta'^2(|z_1|^2+\epsilon)^{\beta-1}\mathrm{d}z_1\wedge\mathrm{d}\bar{z}_1+\sum_{i=2}^n\mathrm{d}z_i\wedge\mathrm{d}\bar{z}_i\}$ for $D$ locally defined by $\{z_1=0\}$. A similar result was obtained by \cite{GP} with a different method.
\end{rmk}

\section{A More General Case--Simple Normal Crossing Divisor (Pluri-anticanonical)}
In this section, we use the same idea to deform the cone angles for \emph{weak conical K\"ahler-Eistein metrics} along a simple normal crossing divisor (pluri-anticanonical) on a smooth Fano manifold. 

Let $X$ be a smooth Fano manifold with a smooth background K\"ahler metric $\omega_0$, and let $D_1,D_2,\cdots,D_k$ be a collection of smooth hypersurfaces in $X$ with simple normal crossing at any intersection points. Let $\mathbf{b}=(\beta_1,\cdots,\beta_k)$ be a $k$-tuple of numbers in $(0,1)$. Let $S_i$ be the defining section of $D_i$, and choose a smooth Hermitian metric $h_i$ on the holomorphic line bundle $L_{D_i}$ with smooth curvature form $\alpha_i$. Let $\chi^i=\alpha_i+\sqrt{-1}\partial\bar\partial \log |S_i|_{h_i}^2=2\pi [D_i]$.
Generalizing Definition \ref{def2-1}, we can define \emph{weak conical K\"ahler-Einstein metrics} with angles $2\pi\beta_i$ along $D_i$ (see \cite{GP} for a more general definition for a KLT pair).

\begin{defn}\label{defn5-1}
 Let  $\omega_{\varphi_\mathbf{b}}$ be a smooth K\"ahler metric on $X\backslash \cup_i D_i$ with the potential $\varphi_\mathbf{b}$ bounded on $X$. If it satisfies:

\begin{equation}\label{eqn5-1}
\omega_{\varphi_{\mathbf{b}}}^n=e^{-\mu\varphi_{\mathbf{b}}+h_{\omega_0}}\frac{\omega_0^n}{\Pi_{i=1}^k |S_i|_{h_i}^{2-2\beta_i}}
\end{equation}
or equivalently the equation:

\begin{equation}
\text{Ric }\omega_{\varphi_{\mathbf{b}}}=\mu\omega_{\varphi_{\mathbf{b}}}+\sum_{i=1}^k (1-\beta_i)\chi^i
\end{equation}
it is called a \emph{weak conical K\"ahler-Einstein metric} on $X$ with angle $2\pi\beta_i$ along $D_i$. 
\end{defn}

 It was shown by Guenancia-P$\breve{\text{a}}$un \cite{GP} that the \emph{weak conical K\"ahler-Einstein metric} defined above has ``conical''  behavior in the sense that it is quasi-isometric to a standard local model conical K\"ahler metric near each point on $\cup_i D_i$ (Theorem $A$ of \cite{GP}) and the metric is actually H\"older continuous (Theorem $B$ of \cite{GP}) . 
 
 Notice that in this situation the \emph{cohomological condition} holds:
\begin{equation}\label{coho}
2\pi c_1(X)=\mu [\omega_{\varphi_{\mathbf{b}}}]+2\pi \sum_{i=1}^k (1-\beta_i)[D_i]
\end{equation}

\noindent {\bfseries{Proof of Theorem \ref{thm1-2}  }  }
If $D_i\in |-\lambda_i K_X|$ with $\lambda_i>0$, then $\alpha_i$ can be chosen to be $\lambda_i\omega_0$ by suitable choice of $h_i$. Let 
$$\chi_\epsilon^i=\lambda_i\omega_0+\sqrt{-1}\partial\bar\partial \log (|S_i|_{h_i}^2+\epsilon)$$
with $\epsilon\in (0,1]$ be the smoothing of 
$$\chi^i=\lambda_i\omega_0+\sqrt{-1}\partial\bar\partial \log |S_i|_{h_i}^2=2\pi [D_i]$$
Let $\mu=r(\mathbf{b}):=1-\sum_{i=1}^k \lambda_{i}(1-\beta_i)$.
The first step is also to approximate $\omega_{\varphi_\mathbf{b}}$ by smooth K\"ahler metrics $\omega_{\varphi_{\epsilon,\mathbf{b}}}$ in the $C^\gamma$ sense. For any $\mu'<\mu$, we introduce the analogous  \emph{modified log-Mabuchi-functional}

$$\widetilde{E}_{\epsilon,\mu'}=E+\sum_{i=1}^k(1-\beta_i)J_{\chi_\epsilon^i}+(\mu-\mu')J_{\omega_{\varphi_{\epsilon,\mathbf{b}}}}$$
where $J_{\chi_\epsilon^i}$ is defined by slightly generalizing the $J_{\alpha}$ functional defined in Definition \ref{defn3-1} to the case where $\alpha\in 2\pi \lambda c_1(X)$. 

This functional can be used to solve the continuity path $\star_{\epsilon,t}^\mathbf{b}$:

\begin{equation}\label{eqn5-3}
\text{Ric }\omega_{\phi_{\epsilon,t}^\mathbf{b}}=t\omega_{\phi_{\epsilon,t}^\mathbf{b}}
+(\mu-t)\omega_{\varphi_{\epsilon, \mathbf{b}}}+\sum_{i=1}^k (1-\beta_i)\chi_\epsilon^i
\end{equation}
up to $t=\mu$ with uniform $L^\infty$ bound on $\phi_{\epsilon,\mu}^\mathbf{b}$. 
One remark is that along the continuity path $\star_{\epsilon,t}^\mathbf{b}$, in the case of negative or zero Ricci curvature, the $L^\infty$ bound is obtained by the maximum principle or Ko\l odziej's estimate \cite{Ko}. Comparatively, in the case of positive Ricci curvature, we use Moser's Iteration, which depends on the uniform bound of the Sobolev constant and Poincar$\acute{\text{e}}$ constant (both hold on the continuity path). Another remark is that the condition \emph{simple normal crossing} is required since Ko\l odziej's estimate requires uniform $L^p$ bound of the R.H.S. of equation \ref{eqn5-1} some some $p>1$. And in the contradiction argument here we need to use the Uniqueness Theorem (Theorem $5.1$ of \cite{BBEGZ}) for weak K\"ahler-Einstein metrics.

We can then deform the angle parameters $\beta_i's$ as in section \ref{sect4}. The potentials $\phi_{\epsilon,\mu}^\mathbf{b}$ give the starting points at $t=\mu$ for the continuity paths:

 \begin{equation}\label{eqn5-4}
 \text{Ric }\omega_{u_{\epsilon,t}}=t\omega_{u_{\epsilon,t}}+\sum_{i=1}^k (1-\beta_{i,t})\chi_\epsilon^i
 \end{equation}
 
Since there are $k$ parameters $\beta_1,\cdots, \beta_k$ to vary and only one time parameter, we need to use successive continuity paths to achieve this, i.e. firstly deform $\beta_1$ to $\beta_1'$ nearby,  and then start from this new weak conical K\"ahler-Einstein metric to deform the second angle $\beta_2$ to $\beta_2'$ nearby, and so on $\cdots$. After $k$ steps, we can deform $(\beta_1,\beta_2, \cdots, \beta_k)$ to $(\beta_1',\beta_2',\cdots,\beta_k')$ nearby. This finishes the proof of Theorem \ref{thm1-2}.

\end{document}